\providecommand\@dotsep{5}
\def\listtodoname{List of Todos}
\def\listoftodos{\@starttoc{tdo}\listtodoname}
\numberwithin{equation}{section}
\newtheorem{theorem}{Theorem}[section]
\newtheorem{proposition}[theorem]{Proposition}
\newtheorem{lemma}[theorem]{Lemma}
\newtheorem{corollary}[theorem]{Corollary}
\newtheorem{remark}{Remark}
\newcommand{\R}{\mathbb{R}}
\newcommand{\N}{\mathbb{N}}
\begin{document}

\title[Nonlocal logistic system with nonlinear advection terms]{On the existence of a positive solution to a nonlocal logistic system with nonlinear advection terms}
\author{Willian Cintra, Romildo Lima and Mayra Soares}

\address[Willian Cintra and Mayra Soares]
{\newline\indent Departamento de Matemática
	\newline\indent 
	Universidade de Brasília
	\newline\indent
	70910-900, Brasília - DF, Brazil} 
\email{\href{mailto: willian@unb.br; mayra.soares@unb.br}{willian@unb.br; mayra.soares@unb.br}}

\address[Romildo Lima]
{\newline\indent Unidade Acadêmica de Matemática
\newline\indent 
Universidade Federal de Campina Grande 
\newline\indent
58429-970, Campina Grande - PB, Brazil} 
\email{\href{mailto: romildo@mat.ufcg.edu.br}{romildo@mat.ufcg.edu.br}}

\pretolerance10000


\begin{abstract} 
In this paper, we study a nonlocal logistic system with nonlinear advection terms
\begin{equation*}
    \left\{
\begin{array}{lcl}
-\Delta u+\vec{\alpha}(x)\cdot \nabla (|u|^{p-1}u)&=&\left(a-\int_{\Omega}K_1(x,y)f(u,v)dy \right)u+bv\mbox{ in }\Omega,\\
-\Delta v+\vec{\beta}(x)\cdot \nabla (|v|^{q-1}v)&=&\left(d-\int_{\Omega}K_2(x,y)g(u,v)dy \right)v+cu\mbox{ in }\Omega,\\
\qquad \qquad \qquad \qquad u=v&=&0\mbox{ on }\partial\Omega,
\end{array}
\right.
\end{equation*}
where $\Omega\subset\R^N$, $N\geq1$, is a bounded domain with a smooth boundary, $\vec{\alpha}(x)=(\alpha_1(x),\cdots,\alpha_N(x))$ and $\vec{\beta}(x)=(\beta_1(x),\cdots,\beta_N(x))$  are flows satisfying suitable conditions, $p,q\geq1$, $a,b,c,d>0$ and $K_1,K_2:\Omega\times\Omega\rightarrow\R$ are nonnegative functions, with their specific conditions detailed below. The functions $f$ and $g$ satisfy some assumptions which allow  us to use bifurcation theory to prove the existence of solution to problem $(P)$. It is important to highlight that the inclusion of the integral nonlocal term on the right-hand side makes the problem more representative of real-world situations. 
\end{abstract}

\thanks{R. Lima was partially supported by CNPq/Brazil 306.411/2022-9 and 176.596/2023-2.}
\thanks{W. Cintra was partially supported by FAPDF 00193.00001821/2022-21 and CNPq 310664/2023-3}

\subjclass[2020]{35B50, 35B51, 35J60} 
\keywords{Maximum principles in context of PDE's, Comparison principles in context of PDE's, Nonlinear elliptic equations}

\maketitle
\tableofcontents
\section{Introduction and Main Results}
In this work, we study a nonlocal logistic system with nonlinear advection terms
\begin{equation}\tag{P}\label{P}
    \left\{
\begin{array}{lcl}
-\Delta u+\vec{\alpha}(x)\cdot \nabla (|u|^{p-1}u)&=&\left(a-\int_{\Omega}K_1(x,y)f(u,v)dy \right)u+bv\mbox{ in }\Omega,\\
-\Delta v+\vec{\beta}(x)\cdot \nabla (|v|^{q-1}v)&=&\left(d-\int_{\Omega}K_2(x,y)g(u,v)dy \right)v+cu\mbox{ in }\Omega,\\
\qquad \qquad \qquad \qquad u=v&=&0\mbox{ on }\partial\Omega,
\end{array}
\right.
\end{equation}
where $\Omega\subset\R^N$, $N\geq1$, is a bounded domain with smooth boundary, $\vec{\alpha}(x)=(\alpha_1(x),\cdots,\alpha_N(x))$ and $\vec{\beta}(x)=(\beta_1(x),\cdots,\beta_N(x))$  are flows satisfying suitable conditions, $p,q\geq1$ and $a,b,c,d>0$. Moreover, $K_1,K_2:\Omega\times\Omega\rightarrow\R$ are nonnegative functions. The functions $f$ and $g$ satisfy some assumptions which allow us to use bifurcation theory to prove the existence of solution to \eqref{P}. 

This system can be interpreted as a reaction-diffusion-advection model within the framework of population dynamics. In this model, the domain $\Omega$ represents the habitat where two species coexist, with their population densities denoted by $u(x)$ and $v(x)$, respectively. The left-hand side of the equations  includes random diffusion terms, characterized by the Laplacian operator, which model the random movement of individuals, and nonlinear advection movement terms, represented by first-order derivatives, accounting for directed movement influenced by factors such as gradients in resources or population density. The right-hand side incorporates reaction terms that depend not only on local population densities but also on nonlocal interactions, reflecting the influence of individuals at a distance. We consider homogeneous Dirichlet boundary conditions, which imply that the boundary is a lethal zone for both species.

The focus of our work is the class of logistic systems as highlighted, however the reader interested in the study of logistic equations can check the works of Allegretto and Nistri \cite{Allegretto-Nistri}, Belgacem and Cosner \cite{BelgacemCosner}, Cantrell and Cosner \cite{CantrellCosner,CantrellCosner2}, Chen and Shi \cite{Chen-Shi}, Chipot \cite{Chipot}, Corrêa, Delgado and Suarez \cite{Correa-Delgado-Suarez}, Cosner \cite{Cosner}, Coville \cite{Coville}, Sun, Shi and Wang \cite{Sun-Shi-Wang}, Umezu \cite{U} and their references.

In a recent paper by Lima, Duarte, and Souto \cite{R}, the equation corresponding to problem \eqref{P} was investigated. Specifically, they analyzed the existence of positive solutions for the following equation:
$$
\left\{
\begin{array}{lcl}
	-\Delta u+\vec{\alpha}(x)\cdot \nabla (|u|^{p-1}u)&=&\left(\lambda-\int_{\Omega}K(x,y)u^{\gamma}(y)dy \right)u\mbox{ in }\Omega,\\
 \qquad \qquad \qquad \qquad \qquad u&=&0\mbox{ on }\partial\Omega.
\end{array}
\right.
$$
This model combines the nonlinear advection term introduced in \cite{CintraMontenegroSuarez} with the nonlocal reaction term from \cite{Alves-Delgado-Souto-Suarez}.

This work inspired us to consider the associated system, which presents several technical challenges due to the simultaneous presence of the nonlinear advection term and the nonlocal character.

Indeed, the study of such models via bifurcation theory remains a challenging problem. For instance, it is well-known in the literature that quasilinear elliptic systems are particularly challenging, especially when they cannot be transformed into a semilinear system via a change of variables, as is the case for \eqref{P}. To analyze such problems from the bifurcation theory perspective, the theorems of Shi and Wang \cite{SW} and Cintra, Morales-Rodrigues, and Suárez \cite{CMRS2019,CMS} have been useful tools.  A different approach is employed by Lima and Souto in \cite{RM} (see also \cite{CS1}), where the authors apply classical global bifurcation results to obtain a continuum of positive solutions emanating from the trivial solution for a nonlocal elliptic system.

Inspired by these papers, in this work, we consider equation \eqref{P}, which combines the technical challenges of the nonlocal term and the nonlinear advection.  

Throughout this work, we will adopt the following assumptions:

\begin{itemize}
    \item $K_1$ and $K_2$ belonging to the class of functions $\mathcal{K}$, which consists of functions $K:\Omega\times\Omega\rightarrow\R$ satisfying
\begin{itemize}
\item[$(K_1)$] $K\in L^{\infty}(\Omega\times\Omega)$ and $K(x,y)\geq0$ a. e. in  $\Omega\times \Omega$.
\item[$(K_2)$]  There exist $\varepsilon>0$ such that 
$$
	|[B_{\varepsilon}(x)\times B_{\varepsilon}(x)]\cap K_+|>0 \quad \forall x \in \Omega,
$$
where $K_+:=\{(x,y)\in\Omega\times\Omega \ : \ K(x,y)>0 \}=K^{-1}(0,+\infty)$.
\end{itemize}

\item The vectorial functions $\vec{\alpha},\ \vec{\beta}:\overline{\Omega}:\to \mathbb{R}^N$ are of class $ C^1$

\item  Regarding the functions $f$ and $g$, as in \cite{RM}, we assume that they satisfy
\begin{itemize}
\item[$(f_0)$] $f,g:[0,\infty)\times[0,\infty)\rightarrow\R^+$ are continuous functions;

\item[$(f_1)$] There exists $\varepsilon_0>0$ such that $f(t,s)\geq\varepsilon_0|t|^\gamma$ and $g(t,s)\geq\varepsilon_0|s|^\gamma$, for all $t,s\in[0,\infty)$ and $\gamma>0$;

\item[$(f_2)$] $f(\xi t,\xi s)=\xi^{\gamma}f(t,s)$ and $g(\xi t,\xi s)=\xi^{\gamma}g(t,s)$, for all $t,s\in[0,\infty)$ and $\xi>0$, where $\gamma>0$.
\end{itemize}

\end{itemize}

The functions $f(t,s) = |t|^{\gamma} + |s|^{\gamma-\mu}|t|^{\mu}$ and $g(t,s) = c_1|t|^{\gamma} + c_2|s|^{\gamma}$ are examples that satisfy conditions $(f_0)-(f_2)$.

\begin{remark}\label{remar1}
The assumption $(K_2)$ is equivalent to assuming that: If $w$ is measurable and $\int_{\Omega\times\Omega}K(x,y)|w(y)|^{\gamma}|w(x)|^{2}dxdy=0$, then $w=0$ a.e. in $\Omega$.
\end{remark}

Furthermore, by result of linear algebra, we can assume that the matrix
\[
A = \begin{pmatrix} a & b \\ c & d \end{pmatrix}
\]  
has \(\lambda_A > 0\) the largest eigenvalue of the matrix. It is well know that there exists an eigenvector $z=(z_1,z_2)$ of $A$ associated to \(\lambda_A \) with $z_1,z_2>0$.
  
The principal eigenvalue of \(-\Delta + \vec{\alpha} \cdot \nabla\) in \(\Omega\) with homogeneous Dirichlet boundary conditions will be denoted by \(\lambda_1^\alpha\). When \(\vec{\alpha} = 0\), this eigenvalue will be denoted by \(\lambda_1\).

Using global and local bifurcation results, we will establish existence and nonexistence results for positive classical solutions of \eqref{P}, that is, a pair $(u,v) \in C^2_0(\overline{\Omega})\times C^2_0(\overline{\Omega})$ with both components positive.

Now we can state our main results regarding the existence of a positive solution for problem \eqref{P}. The first result concerns the case where \( p = q = 1 \).

\begin{theorem} \label{TP1}
	Assume that $K_1,K_2\in \mathcal{K}$ and $(f_0)-(f_2)$ hold. If $p=q=1$, $\vec{\alpha} = \vec{\beta}$  and $\operatorname{div}\vec{\alpha}(x)=0$ in $\Omega$ then problem \eqref{P} 	has a positive solution if and only if $\lambda_A>\lambda_1^{\alpha}$.
\end{theorem}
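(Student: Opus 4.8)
The plan is to exploit that for $p=q=1$ the advection terms $\vec{\alpha}\cdot\nabla(|u|^{p-1}u)$ reduce to the linear drift $\vec{\alpha}\cdot\nabla u$, so that, since $\vec{\alpha}=\vec{\beta}$, both equations are governed by the same linear operator $L:=-\Delta+\vec{\alpha}\cdot\nabla$ under Dirichlet data. Writing $\mathbf{u}=(u,v)$, $h_1(x):=\int_\Omega K_1(x,y)f(u,v)\,dy$ and $h_2(x):=\int_\Omega K_2(x,y)g(u,v)\,dy$, the system becomes $L\mathbf{u}=A\mathbf{u}-(h_1u,\,h_2v)$. I first record the linear ingredients. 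By Krein--Rutman, $L$ has a simple principal eigenvalue $\lambda_1^\alpha$ with eigenfunction $\phi>0$, and its adjoint $L^*=-\Delta-\vec{\alpha}\cdot\nabla$ (here $\operatorname{div}\vec{\alpha}=0$ is used) has the same principal eigenvalue with a positive eigenfunction $\psi>0$; since $A$ is a positive matrix, Perron--Frobenius gives that its dominant eigenvalue $\lambda_A$ is simple, with a right eigenvector $z=(z_1,z_2)>0$ and a left eigenvector $w=(w_1,w_2)>0$.

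\textbf{Necessity.} Assume $(u,v)$ is a positive solution. Because $K_i\ge 0$ and $f,g\ge 0$, the coefficients $h_1,h_2$ are nonnegative, and by $(f_1)$ together with $(K_2)$ they are in fact strictly positive in $\Omega$; hence $Lu<au+bv$ and $Lv<cu+dv$ pointwise. Testing the first inequality against $w_1\psi$ and the second against $w_2\psi$, summing, and transferring $L$ onto $\psi$ through $\int_\Omega \psi\,Lw_i=\int_\Omega w_i\,L^*\psi$ (the boundary terms vanish), the left-hand side equals $\lambda_1^\alpha\int_\Omega \psi(w_1u+w_2v)$, while the right-hand side equals $\int_\Omega \psi\,(A^{\top}w)\cdot\mathbf{u}=\lambda_A\int_\Omega \psi(w_1u+w_2v)$. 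Since $\int_\Omega \psi(w_1u+w_2v)>0$ and the inequality is strict, I conclude $\lambda_1^\alpha<\lambda_A$.

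\textbf{Sufficiency.} Assume $\lambda_A>\lambda_1^\alpha$. I introduce a parameter $\lambda>0$ and the family $(P_\lambda)$ obtained by replacing $A$ with $\lambda A$, so that \eqref{P} is $(P_1)$. The nonlocal term is of order $1+\gamma$, so the linearization of $(P_\lambda)$ at the trivial solution is $L\mathbf{u}=\lambda A\mathbf{u}$, which admits the positive solution $\mathbf{u}=\phi\,z$ exactly when $\lambda\lambda_A=\lambda_1^\alpha$, i.e.\ at $\lambda^{*}:=\lambda_1^\alpha/\lambda_A$. Since $\lambda_1^\alpha$ and $\lambda_A$ are simple, $\lambda^{*}$ is a simple bifurcation value, and the positivity of $\phi z$ together with the cooperative coupling ($b,c>0$) lets me invoke the bifurcation-from-the-principal-eigenvalue machinery (Shi--Wang \cite{SW} and Cintra--Morales-Rodrigues--Su\'arez \cite{CMRS2019,CMS}) to produce a local continuum of positive solutions emanating from $(\lambda^{*},0,0)$, continued globally by Rabinowitz's theorem. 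To control the continuum I reduce the a priori bounds to the scalar case: the combination $\sigma:=w_1u+w_2v$ satisfies $L\sigma=\lambda\lambda_A\,\sigma-(w_1h_1u+w_2h_2v)$, and by $(f_1)$ the subtracted term dominates the nonlocal logistic damping of the scalar problem studied in \cite{R}, giving an $L^\infty$ bound uniform on compact $\lambda$-intervals. Hence the continuum cannot blow up for bounded $\lambda$, stays in the positive cone, and by the strong maximum principle can meet the trivial branch only at eigenvalue crossings; therefore it is unbounded in $\lambda$, and its connected projection onto the $\lambda$-axis contains $(\lambda^{*},\infty)$. As $\lambda_A>\lambda_1^\alpha$ is equivalent to $1>\lambda^{*}$, the value $\lambda=1$ lies in this interval and yields a positive solution of \eqref{P}.

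The step I expect to be the main obstacle is the uniform a priori bound for positive solutions of the nonlocal system: the damping coefficients $h_i(x)$ may degenerate near $\partial\Omega$, where $\phi\to 0$, so the standard local logistic estimates do not apply directly. The scalar reduction through $\sigma$, combined with assumption $(K_2)$ --- equivalently the nondegeneracy stated in Remark~\ref{remar1} --- is what I would rely on to transfer the estimates from \cite{R}. The second delicate point is verifying the hypotheses of the system bifurcation theorems (simplicity, transversality, and compactness of the solution operator) in the presence of the nonlocal term.
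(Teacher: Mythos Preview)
Your global strategy coincides with the paper's: introduce a scalar parameter in front of $A$, bifurcate from the trivial solution at $\lambda^{*}=\lambda_1^{\alpha}/\lambda_A$ via Rabinowitz, rule out return to the trivial branch, and combine nonexistence for $\lambda\le\lambda^{*}$ with a priori bounds to force the positive continuum to cover $(\lambda^{*},\infty)$. Your necessity argument---testing with the adjoint eigenfunction $\psi$ of $L^{*}=-\Delta-\vec{\alpha}\cdot\nabla$ and the \emph{left} Perron eigenvector of $A$---is a correct variant of what the paper does (the paper instead rescales $v$ by $\sigma=\sqrt{b/c}$ so that the coupling matrix becomes symmetric and then tests with $\varphi_1^{\alpha} z$).

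The genuine gap is the a priori bound. Your scalar reduction $\sigma=w_1u+w_2v$ gives
\[
L\sigma=\lambda\lambda_A\,\sigma-(w_1h_1u+w_2h_2v),
\]
but this equation is not closed in $\sigma$: assumption $(f_1)$ only yields
\[
w_1h_1u+w_2h_2v\ \ge\ \varepsilon_0\Bigl(w_1\,u\!\int_{\Omega}K_1u^{\gamma}\,dy+w_2\,v\!\int_{\Omega}K_2v^{\gamma}\,dy\Bigr),
\]
which does \emph{not} dominate $c\,\sigma\!\int_{\Omega}\widetilde K\,\sigma^{\gamma}\,dy$ for any $\widetilde K\in\mathcal{K}$, because the cross terms $v(x)\!\int\widetilde K(x,y)u(y)^{\gamma}dy$ and $u(x)\!\int\widetilde K(x,y)v(y)^{\gamma}dy$ are uncontrolled (take $u$ large on a set where $v$ is small and vice versa). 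Hence the comparison with the scalar problem of \cite{R} does not go through; you correctly flagged this step as the obstacle, but the proposed fix does not work.

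The paper bypasses the reduction entirely. Its a priori bound (Lemma~\ref{L1}) argues by contradiction: given positive solutions $(u_n,v_n)$ with $\|U_n\|_{H}\to\infty$, normalize $\bar U_n=U_n/\|U_n\|_{H}$, test each equation with its own normalized component divided by $\|U_n\|_{H}^{\gamma}$, and use the homogeneity $(f_2)$ together with $\operatorname{div}\vec{\alpha}=0$ to obtain
\[
\int_{\Omega}\Phi_{(\bar u_n,\bar v_n)}\bar u_n^{2}\,dx\to 0,\qquad \int_{\Omega}\Psi_{(\bar u_n,\bar v_n)}\bar v_n^{2}\,dx\to 0.
\]
Fatou and $(f_1)$ then give $\int_{\Omega\times\Omega}K_1|u|^{\gamma}|u|^{2}=\int_{\Omega\times\Omega}K_2|v|^{\gamma}|v|^{2}=0$ for the weak limit $(u,v)$, and the nondegeneracy of Remark~\ref{remar1} (i.e.\ $(K_2)$) forces $u=v=0$; testing once more with $\bar u_n,\bar v_n$ contradicts $\|\bar U_n\|_{H}=1$. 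The key point is that $(f_1)$ and $(K_2)$ are applied \emph{componentwise}, never through a scalar logistic inequality for a combination of $u$ and $v$. This is the idea you are missing for the bound; the rest of your outline is sound (and for $p=q=1$ the problem is semilinear, so classical Rabinowitz suffices---you do not need \cite{SW,CMRS2019,CMS}).
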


The second result deals with the system where \( p, q > 1 \).

\begin{theorem} \label{TP2}
Assume that $K_1,K_2\in \mathcal{K}$ and $(f_0)-(f_2)$ hold with  $\gamma>\max\{p,q\}$.  If $p,q>1$ and $\operatorname{div}\vec{\alpha}(x)=\operatorname{div}\vec{\beta}(x)=0$ in $\Omega$ then problem \eqref{P} has positive solution if, and only if $\lambda_A>\lambda_1$.
\end{theorem}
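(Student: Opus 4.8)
The plan is to prove both implications by embedding \eqref{P} in a one--parameter family: I introduce a scalar bifurcation parameter $\lambda$ by replacing the reaction matrix $A$ with $\lambda A$, so that \eqref{P} is recovered at $\lambda=1$ and the largest eigenvalue of the reaction matrix becomes $\lambda\lambda_A$. The goal is then to show that the $\lambda$--problem admits a positive solution if and only if $\lambda\lambda_A>\lambda_1$, and to read off the theorem at $\lambda=1$, where $\lambda\lambda_A>\lambda_1$ is exactly $\lambda_A>\lambda_1$.

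For the necessity (and, at the same time, the nonexistence below the bifurcation value), I would take a positive solution $(u,v)$ of the $\lambda$--problem and test the first equation with $u$ and the second with $v$. Since $p,q>1$ and $\operatorname{div}\vec\alpha=\operatorname{div}\vec\beta=0$, the advection terms drop out: using $u\,\vec\alpha\cdot\nabla(|u|^{p-1}u)=\frac{p}{p+1}\vec\alpha\cdot\nabla(|u|^{p+1})=\frac{p}{p+1}\operatorname{div}(|u|^{p+1}\vec\alpha)$ and the Dirichlet condition, the integral vanishes. With the Poincaré inequality $\int_\Omega|\nabla u|^2\ge\lambda_1\int_\Omega u^2$ and the strict positivity of the dissipative nonlocal integrals (which is where $(K_2)$ and $(f_0)$--$(f_1)$ enter, via Remark \ref{remar1}), I obtain the two strict inequalities $(\lambda_1-\lambda a)X<\lambda b Z$ and $(\lambda_1-\lambda d)Y<\lambda c Z$, where $X=\int u^2$, $Y=\int v^2$, $Z=\int uv>0$ and $Z^2\le XY$. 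If $\lambda_1\le\lambda\max\{a,d\}$ then $\lambda_1<\lambda\lambda_A$ is immediate, because $\lambda_A>\max\{a,d\}$ for a matrix with positive entries; otherwise both left factors are positive, and multiplying the inequalities and using $Z^2\le XY$ yields $(\lambda_1-\lambda a)(\lambda_1-\lambda d)<\lambda^2 bc$, i.e. $\det(\lambda_1 I-\lambda A)<0$, which forces $\lambda_1$ strictly below the largest eigenvalue $\lambda\lambda_A$. In all cases $\lambda\lambda_A>\lambda_1$.

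For the sufficiency I would set up a global bifurcation scheme. Inverting the quasilinear principal part $u\mapsto-\Delta u+\vec\alpha\cdot\nabla(|u|^{p-1}u)$, which is coercive and monotone (coercivity again using the divergence--free condition), gives a well--defined compact nonlinear solution map, so that the $\lambda$--problem reads $(u,v)=\mathcal T(\lambda,u,v)$ in a suitable Banach space $E$ of $C^1$ functions vanishing on $\partial\Omega$. Linearizing at the trivial branch $(u,v)=(0,0)$, the advection contribution is of order $\min\{p,q\}>1$ and the nonlocal terms of order $\gamma+1>1$, so the linearization is governed by $-\Delta-\lambda A$; its kernel becomes nontrivial and positive only at $\lambda^{*}=\lambda_1/\lambda_A$, with eigenfunction $\phi_1 z$, where $\phi_1>0$ is the principal eigenfunction of $-\Delta$ and $z>0$ the Perron eigenvector of $A$. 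A Crandall--Rabinowitz/Krein--Rutman analysis identifies $\lambda^{*}$ as a simple bifurcation value of positive solutions, and the quasilinear global bifurcation machinery of \cite{SW,CMS,CMRS2019} produces an unbounded continuum $\mathcal C^{+}$ of positive solutions emanating from $(\lambda^{*},0,0)$.

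The last step, and the main obstacle, is to control $\mathcal C^{+}$ so that its projection on the $\lambda$--axis is exactly $(\lambda^{*},\infty)$. The necessity estimate above already shows that positive solutions force $\lambda>\lambda^{*}$, so $\mathcal C^{+}$ cannot cross $\lambda^{*}$; the difficulty is to exclude blow--up for $\lambda$ in compact subintervals. Testing again with $u,v$ (advection disappearing as before) and invoking $(f_1)$, the dissipation furnishes a nonlocal term of homogeneity $\gamma+2$ against a quadratic reaction, and the hypothesis $\gamma>\max\{p,q\}$ is used precisely here: it makes the superlinear nonlocal dissipation dominate the genuinely quasilinear, non--variational advection, so that a rescaling argument together with elliptic regularity yields uniform $L^{\infty}$ bounds. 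Hence $\mathcal C^{+}$ is unbounded only in the $\lambda$--direction and covers all of $(\lambda^{*},\infty)$. Since $\lambda_A>\lambda_1$ is equivalent to $\lambda^{*}<1$, the value $\lambda=1$ then lies in the projection and gives a positive solution of \eqref{P}; conversely, when $\lambda_A\le\lambda_1$ the necessity step forbids any positive solution at $\lambda=1$. Establishing the compactness of $\mathcal T$ and, above all, the uniform a priori bounds in the presence of the quasilinear advection is where the real work lies.
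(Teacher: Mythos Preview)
Your overall architecture --- embed \eqref{P} into a one--parameter family by scaling $A\mapsto tA$, prove nonexistence for $t\le t_1=\lambda_1/\lambda_A$, obtain a global continuum of positive solutions bifurcating from $(t_1,0)$, close via uniform a priori bounds, and specialize to $t=1$ --- is exactly the paper's route (Theorem~\ref{LP2} together with Lemmas~\ref{L1.} and~\ref{lem:nosolution2}).

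Two implementation differences deserve comment. For the nonexistence step the paper does \emph{not} use your determinant/Cauchy--Schwarz argument on $X=\int u^2$, $Y=\int v^2$, $Z=\int uv$; instead it rescales $w=\sigma v$ with $\sigma^2=b/c$ so that the coupling matrix becomes the symmetric $A_0$ with the same top eigenvalue $\lambda_A$, and then applies the single quadratic bound $\langle A_0z,z\rangle\le\lambda_A|z|^2$ together with Poincar\'e (Lemma~\ref{lem:nosolution2}). Your argument is correct and avoids the symmetrization trick, at the price of a small case split. For the bifurcation step, however, you take a harder and shakier road than necessary: the paper does \emph{not} invert the quasilinear principal part $u\mapsto-\Delta u+\vec\alpha\cdot\nabla(|u|^{p-1}u)$, but simply inverts $-\Delta$ and writes the problem as $U=tS_{pq}(U)+G_{pq}(U)$, where $S_{pq}=(-\Delta)^{-1}A$ is linear compact and $G_{pq}$ packages \emph{both} the advection and the nonlocal terms. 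Since $p,q>1$ and $\gamma>0$, one has $G_{pq}(U)=o(\|U\|_1)$, so the classical Rabinowitz theorem applies directly and the machinery of \cite{SW,CMS,CMRS2019} is never invoked. Your assertion that the quasilinear operator is ``monotone'' is doubtful (the first--order part $p|u|^{p-1}\vec\alpha\cdot\nabla u$ has no reason to be monotone), so your setup would require additional justification that the paper's simpler formulation sidesteps entirely. The a priori bound is indeed the rescaling you sketch: normalize, test with $\overline u_n/\|U_n\|_H^{\gamma}$, use the homogeneity $(\Phi_1)$ and $\gamma>\max\{p,q\}$ to absorb the advection factor, and conclude $u=v=0$ via Remark~\ref{remar1} (Lemma~\ref{L1.}).
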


The condition on the divergence of the vector fields \( \vec{\alpha} \) and \( \vec{\beta} \) ensures that there is no positive solution to \eqref{P} for \( \lambda_A \leq \lambda_1^\alpha \) or \( \lambda_A \leq \lambda_1 \) (see Lemma \ref{lem:nosolution} and \ref{lem:nosolution2}). In Section 5, we will use local bifurcation results to show that, without this condition, in some cases, there exists a positive solution to \eqref{P} for \( \lambda_A \) smaller than \( \lambda_1 \).

\begin{remark}\label{remar2}
The solutions given by Theorems \ref{TP1} and \ref{TP2} belong to \(C^1_0(\overline{\Omega})\). Therefore, assuming that \(\operatorname{div}\vec{\alpha}(x) = 0\), the identity 
\begin{equation*}
p\int_{\Omega}u^p[\vec{\alpha}(x)\cdot\nabla u] dx=\frac{p}{p+1}\int_{\Omega}\vec{\alpha}(x)\cdot \nabla u^{p+1}dx=-\frac{p}{p+1}\int_{\Omega}u^{p+1}\operatorname{div}(\vec{\alpha}(x))dx+\frac{p}{p+1}\int_{\partial\Omega} u^{p+1}(\vec{\alpha}\cdot\eta) d\sigma,
\end{equation*}
which holds for all \(u \in C^1(\overline{\Omega})\) and \(\vec{\alpha} \in C^1(\overline{\Omega})\), is fundamental and will be used at various points throughout this paper.
\end{remark}

At this stage, it is crucial to emphasize that the inspiration for the results presented here stems primarily from the works \cite{R} and \cite{RM}. Several key findings from these studies are indispensable to our analysis. However, to establish our results, particularly Theorem \ref{TP2}, we introduce a set of "soft" assumptions. A good example of how "soft" our hypotheses are is the fact that symmetry in $A$ is not required. Despite this, we still obtain the non-existence result, as demonstrated in Lemmas \ref{lem:nosolution} and \ref{lem:nosolution2}. We believe that, while these assumptions may initially appear unconventional due to the non-linearities involved, we prove to be sufficient. Additionally, we conduct a detailed study of the behavior of the branch of positive solutions to the problem, see Section 5, a significant aspect for real-world applications that was not addressed in the works that inspired our research.

The paper is organized as follows. Section 2 introduces preliminary results and definitions that are essential for achieving our objectives. 
In Section 3, we establish the existence and nonexistence of positive solutions to problem \eqref{P} with \(p = q = 1\) and \(\vec{\alpha} = \vec{\beta}\). Specifically, we prove Theorem \ref{TP1}.
In Section 4, we establish the existence and nonexistence of positive solutions to problem \eqref{P} with \(p, q > 1\). In particular, we prove Theorem \ref{TP2}.
Finally, Section 5 provides a brief analysis of the behavior of the positive solution branches obtained in the previous theorems.

\section{Preliminaries}

In this section, we construct the operators essential for the development of results in subsequent sections. Firstly, we fix some notations:
\begin{itemize}
	\item[(i)] For every $\vec{\alpha}=(\alpha_1,\cdots,\alpha_N)\in\R^N$, we denote the euclidian norm by $|\vec{\alpha}|=\sqrt{\alpha_1^2+\cdots+\alpha_N^2}$; 
	\item[(ii)] $L^s(\Omega)$, for $1\leq s\leq\infty$ is the Lebesgue space with the usual norm denoted by $|u|_s$.
	\item[(iii)] $\|\cdot\|_{C^j(\overline{\Omega})}$, $\|\cdot\|$ and $\|\cdot\|_{p,q}$ denote the usual norms of the spaces $C^j(\overline{\Omega})$, for $j\in\N\cup\{0\}$, the Sobolev space $H_0^1(\Omega)$ and the Sobolev  space $W^{p,q}(\Omega)$, respectively.
	
	\item[(iv)] $\sigma(A)$ denotes the real spectrum of the matrix $A$. Similarly, $\sigma(-\Delta + \vec{\alpha} \cdot \nabla)$ represents the spectrum of the operator $(-\Delta+\vec{\alpha} \cdot \nabla, H_{0}^{1}(\Omega))$.
	
	\item[(v)]  The terms of the form $U = (u,v)$ will be written as a column matrix
	$U=\left(\begin{array}{c}u\\v\end{array}\right)$,  whenever convenient. Moreover, $-\Delta U=(-\Delta u,-\Delta v)$ or $-\Delta U=\left(\begin{array}{c}-\Delta u\\-\Delta v\end{array}\right)$ and 	
	$z=(\alpha,\beta)>0$ or $z=\left(\begin{array}{c}\alpha\\\beta\end{array}\right)>0$ means $\alpha,\beta>0$. 	
    \item[(vi)]  For each $U=(u,v) \in H_0^1(\Omega)\times H_0^1(\Omega)$, we define
\begin{equation*}
L(U):=\left[
\begin{array}{c}
-\Delta u+\vec{\alpha}(x)\cdot\nabla u\\
-\Delta v+\vec{\alpha}(x)\cdot\nabla v
\end{array}
\right].
\end{equation*}

	\item[(viii)] $E:=C(\overline{\Omega})\times C(\overline{\Omega})$, with the norm given by
	$\|U\|=\|u\|_{C^0(\overline{\Omega})}+\|v\|_{C^0(\overline{\Omega})}$,
	for all $U=(u,v)\in E$ and $E_1:=C^{1}_0(\overline{\Omega})\times C^1_0(\overline{\Omega})$, with the norm given by
	$\|U\|_1=\|u\|_{C^1(\overline{\Omega})}+\|v\|_{C^1(\overline{\Omega})},$ 	for all $U=(u,v)\in E_1$.
\end{itemize}

We now present the matrix formulation of the problem and the solution operators used in the main theorem proofs.
Since $K_1,K_2\in\mathcal{K}$, assuming $(f_0)-(f_2)$ and defining
$\Phi,\Psi:L^{\infty}(\Omega)\times L^{\infty}(\Omega)\rightarrow L^{\infty}(\Omega)$ by
$$\Phi_{(u,v)}(x)=\int_{\Omega}K(x,y)f(|u(y)|,|v(y)|)dy
\mbox{ \ and \ }
\Psi_{(u,v)}(x)=\int_{\Omega}K_2(x,y)g(|u(y)|,|v(y)|)dy,$$
the following conditions hold for $\Phi$ and $\Psi$:
\begin{itemize}
\item[$(\Phi_1)$] $t^\gamma \Phi_{(u,v)}=\Phi_{(tu,tv)}$ and $t^\gamma \Psi_{(u,v)}=\Psi_{(tu,tv)}$, for all $u,v\in L^{\infty}(\Omega)$ and $t>0$;
\item[$(\Phi_2)$] $\|\Phi_{(u,v)}\|_\infty\leq\|K\|_\infty|\Omega|\|f(|u|,|v|)\|_\infty$ and $\|\Psi_{(u,v)}\|_\infty\leq\|K\|_\infty|\Omega|\|g(|u|,|v|)\|_\infty$, for all $u,v\in L^{\infty}(\Omega)$.
\end{itemize}
So, considering positive solutions only, the problem \eqref{P} can be written in the form
$$
\left\{
\begin{array}{lcl}
	-\Delta u+\vec{\alpha}(x)\cdot \nabla (|u|^{p-1}u)+\Phi_{(u,v)}u&=&au+bv\quad\mbox{in}\quad\Omega,\\
	-\Delta v+\vec{\beta}(x)\cdot \nabla (|v|^{q-1}v)+\Psi_{(u,v)}u&=&cu+dv\quad\mbox{in}\quad\Omega,\\
 \qquad \qquad \qquad \qquad \qquad \qquad	u,v&>&0\quad\mbox{in}\quad\Omega,\\
\qquad \qquad \qquad \qquad \qquad \qquad	u=v&=&0\quad\mbox{on}\quad\partial\Omega.
\end{array}
\right.\leqno(P^+)
$$
Here, we point out that $U=(u,v)$ satisfies the above problem in the weak sense, if $u,v\in H_{0}^{1}(\Omega)$ and
\begin{eqnarray}
	\int_{\Omega}\nabla u\nabla\varphi dx+\int_{\Omega}p|u|^{p-1}(\vec{\alpha}(x)\cdot\nabla u)\varphi dx+\int_{\Omega}\Phi_{(u,v)}(x)u\varphi dx=\int_{\Omega}(au+bv)\varphi dx\\
	\int_{\Omega}\nabla v\nabla\eta dx+\int_{\Omega}q|v|^{q-1}(\vec{\beta}(x)\cdot\nabla v)\eta dx+\int_{\Omega}\Psi_{(u,v)}(x)v\eta dx=\int_{\Omega}(cu+dv)\eta dx
\end{eqnarray}
for all $\eta,\varphi\in H_{0}^{1}(\Omega)$.

We will prove the existence of a positive solution to problem \eqref{P} using the  global bifurcation result due to Rabinowitz, see \cite{Rabinowitz}. Firstly, we recall that, if $\vec{\mu}\in C(\overline{\Omega})^N$,  for each $h\in L^{\infty}(\Omega)$, there exists a unique function $\omega\in C^{1}(\overline{\Omega})$ satisfying
$$
\left\{\begin{array}{lcl}
	-\Delta\omega+\vec{\mu}\cdot \nabla \omega&=&h(x)\mbox{ in }\Omega,\\
\qquad \qquad \quad	\omega&=&0\mbox{ on }\partial\Omega.
\end{array}
\right.
$$
Moreover, there exists $c_\infty=c_\infty(\Omega,\vec{\mu})>0$ such that $\|\omega\|_{C^{1}(\overline{\Omega})}\leq c_\infty\|h\|_\infty.$
We use this property freely in the construction and the elementary properties of the solution operators  below.

\noindent\textbf{The case $p=q=1$ and $\vec{\alpha}=\vec{\beta}$}.
We will consider the solution operator $S_1:E_1\rightarrow E_1$, given by
$$S_1(u,v)=(u_1,v_1)\Leftrightarrow\left\{\begin{array}{lcl}
	-\Delta u_1+\vec{\alpha}(x)\cdot\nabla u_1&=&au+bv\quad\mbox{in}\quad\Omega,\\
	-\Delta v_1+\vec{\alpha}(x)\cdot\nabla v_1&=&cu+dv\quad\mbox{in}\quad\Omega,\\
 \quad \qquad \qquad	u_1= v_1&=&0\quad\mbox{on}\quad\partial\Omega,
\end{array}\right.$$
In view of the Schauder embedding, we know that $S_1$ is well defined, linear, compact, and verifies
$$\|S_1(U)\|_1\leq C\|U\|_1,\mbox{ for all }U\in E_1\mbox{ and some }C>0,$$
where $U=(u,v)$ and $S_1(U)=U_1=(u_1,v_1)$. Similarly, we set the nonlinear operator $G_1:E_1\rightarrow E_1$ given by
$$G_1(u,v)=(u_1,v_1)\Leftrightarrow\left\{\begin{array}{lcl}
-\Delta u_1+\vec{\alpha}(x)\cdot\nabla u_1&=&-\Phi_{(u,v)}(x)u\quad\mbox{in}\quad\Omega,\\
-\Delta v_1+\vec{\alpha}(x)\cdot\nabla v_1&=&-\Psi_{(u,v)}(x)v\quad\mbox{in}\quad\Omega,\\
\quad \qquad \qquad u_1=v_1&=&0\quad\mbox{on}\quad\partial\Omega.
\end{array}\right.$$
Then, $G$ is well defined, continuous and satisfies
$$\|G_1(U)\|_1\leq C(\|\Phi_{(u,v)}\|_\infty+\|\Psi_{(u,v)}\|_\infty)\|U\|_1,\mbox{ for all }U\in E_1.$$
Using again the Schauder embedding, we have that $G_1:E_1\rightarrow E_1$ is compact. Additionally, using $(f_2)$ and $(\Phi_2)$, we verify that
$$G_1(U)=o(\|U\|_1).$$

\noindent\textbf{The case $p,q>1$}.
Consider the solution operator $S_{pq}:E_1\rightarrow E_1$, given by
$$S_{pq}(u,v)=(u_1,v_1)\Leftrightarrow\left\{\begin{array}{lcl}
	-\Delta u_1&=&au+bv\quad\mbox{in}\quad\Omega,\\
	-\Delta v_1&=&cu+dv\quad\mbox{in}\quad\Omega,\\
	u_1=v_1&=&0\quad\mbox{on}\quad\partial\Omega.
\end{array}\right.$$
We have that $S_{pq}$ is well defined, linear, compact and verifies
$$\|S_{pq}(U)\|_1\leq C\|U\|_1,\mbox{ for all }U\in E_1\mbox{ and some }C>0, 
\mbox{ where } U=(u,v) \mbox{ and } S_{pq}(U)=U_1=(u_1,v_1).$$

On the other hand, setting the nonlinear operator $G_{pq}:E_1\rightarrow E_1$ given by
$$G_{pq}(u,v)=(u_1,v_1)\Leftrightarrow\left\{\begin{array}{lcl}
	-\Delta u_1+p|u|^{p-1}\vec{\alpha}(x)\cdot\nabla u+\Phi_{(u,v)}(x)u&=&0\quad\mbox{in}\quad\Omega,\\
	-\Delta v_1+q|v|^{q-1}\vec{\beta}(x)\cdot\nabla v+\Psi_{(u,v)}(x)v&=&0\quad\mbox{in}\quad\Omega,\\
\qquad \qquad \quad \qquad \qquad \qquad \qquad	u_1=v_1&=&0\quad\mbox{on}\quad\partial\Omega.
\end{array}\right.$$
we have that $G_{pq}$ is well defined, continuous and checks
$$\|G_{pq}(U)\|_1\leq C(\|\Phi_{(u,v)}\|_\infty+c\|u\|_{\infty}^{p-1}+\|\Psi_{(u,v)}\|_\infty+c\|v\|_{\infty}^{q-1})\|U\|_1,\mbox{ for all }U\in E_1.$$
Using again the Schauder embedding, we have that $G_{pq}:E_1\rightarrow E_1$ is compact. Moreover, from $(f_2)$ and $(\Phi_2)$, it is possible to verify that
$$G_{pq}(U)=o(\|U\|_1).$$

It is important to highlight that some results obtained by Lima and Souto in \cite{RM} are also valid in our case and will be applied here. In order to be more precise, throughout this work we will cite these corresponding results as they become necessary to be used, although, we will not state or prove them again.

\section{Existence and nonexistence of solution to $p=q=1$ and $\vec{\alpha}=\vec{\beta}$}

We aim to prove Theorem \ref{TP1} via bifurcation theory, then we need to introduce a parameter $t\in \mathbb{R}$ into \eqref{P} and consider the following auxiliary problem

	\begin{equation}
	   	\left\{
	\begin{array}{lcl}
		-\Delta u+\vec{\alpha}(x)\cdot \nabla u+\Phi_{(u,v)}u&=&t(au+bv)\quad\mbox{in}\quad\Omega,\\
		-\Delta v+\vec{\alpha}(x)\cdot \nabla v+\Psi_{(u,v)}v&=&t(cu+dv)\quad\mbox{in}\quad\Omega,\\
\quad	\qquad \qquad \qquad \qquad	u=v&=&0\quad\mbox{on}\quad\partial\Omega.
	\end{array}
	\right. \tag{$P_{1,\alpha}^t$}\label{P1a}
	\end{equation}

\begin{theorem} \label{LP}
	Assume that $K_1,K_2\in \mathcal{K}$ and the conditions 
	 $(f_0)-(f_2)$ hold.    If $\operatorname{div}\vec{\alpha}(x)=0$ in $\Omega$ then problem  \eqref{P1a}	has a positive solution if, and only if, $t>t_1^\alpha:=\lambda_1^{\alpha}/\lambda_A$.
\end{theorem}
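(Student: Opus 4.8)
The plan is to reformulate \eqref{P1a} as a nonlinear eigenvalue problem in $E_1$ and apply Rabinowitz global bifurcation together with a careful analysis of the principal eigenvalue $\lambda_1^\alpha$. Using the operators $S_1$ and $G_1$ constructed in Section~2, a positive solution $U=(u,v)$ of \eqref{P1a} corresponds to a fixed-point equation of the form $U = t\,S_1(U) + G_1(U)$, where $S_1$ is linear compact and $G_1$ is compact with $G_1(U)=o(\|U\|_1)$. Since $t=0$ together with $U=0$ is the trivial branch, the candidate bifurcation points are the reciprocals of the characteristic values of $S_1$, i.e.\ the values $t$ for which $t\,S_1$ has eigenvalue $1$. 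The first key step is therefore to identify the principal characteristic value: I would show that the smallest $t$ admitting a positive eigenfunction of $U=t\,S_1(U)$ is exactly $t_1^\alpha=\lambda_1^\alpha/\lambda_A$. This reduces to the linearized system $L(U)=t\,AU$ with Dirichlet conditions; writing $U=\phi\,z$ with $z=(z_1,z_2)>0$ the Perron eigenvector of $A$ for $\lambda_A$ and $\phi$ the principal eigenfunction of $-\Delta+\vec\alpha\cdot\nabla$ for $\lambda_1^\alpha$, one checks $L(U)=\lambda_1^\alpha\phi\,z$ and $t\,AU=t\lambda_A\phi\,z$, so equality forces $t\lambda_A=\lambda_1^\alpha$. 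The positivity and simplicity of $\lambda_1^\alpha$ (Krein--Rutman for the advection operator) give that this is the unique characteristic value with a positive eigenfunction.

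For the existence direction (when $t>t_1^\alpha$), I would invoke Rabinowitz's theorem to obtain an unbounded continuum $\mathcal{C}$ of nontrivial solutions of $U=t\,S_1(U)+G_1(U)$ bifurcating from $(t_1^\alpha,0)$, and then argue, by the usual maximum-principle/strong-positivity reasoning on $E_1$, that solutions near the bifurcation point inherit the sign of $z$ and hence are positive; a standard continuation argument keeps the component in the positive cone (the cone of pairs with both entries positive) until a possible return to the trivial branch, which is excluded because $t_1^\alpha$ is the only characteristic value meeting the cone. Projecting $\mathcal{C}$ onto the $t$-axis, unboundedness together with the a~priori bounds available for this problem forces the positive continuum to cover the whole ray $t>t_1^\alpha$, yielding a positive solution for every such $t$.

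For the nonexistence direction (when $t\le t_1^\alpha$), this is where the hypothesis $\operatorname{div}\vec\alpha=0$ enters decisively. Suppose $(u,v)$ were a positive solution; testing the two equations with the adjoint principal eigenfunction (or directly with $\phi$) and using Remark~\ref{remar2} to kill the advection contribution---precisely because $\operatorname{div}\vec\alpha=0$ and $u=v=0$ on $\partial\Omega$ make $\int_\Omega \phi\,(\vec\alpha\cdot\nabla u)$ integrate to a controllable boundary-free term---one derives a scalar inequality comparing $\lambda_1^\alpha$ with $t\lambda_A$ in which the strictly positive nonlocal terms $\Phi_{(u,v)}u,\ \Psi_{(u,v)}v$ appear with a favorable sign. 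Concretely, one obtains an estimate of the shape $\lambda_1^\alpha \le t\lambda_A - (\text{positive nonlocal contribution})$, which is incompatible with $t\le t_1^\alpha=\lambda_1^\alpha/\lambda_A$ unless $(u,v)\equiv 0$. The main obstacle, and the step requiring the most care, is exactly this testing argument: the advection terms are first order and do not vanish termwise, so the role of $\operatorname{div}\vec\alpha=0$ must be exploited through the integration-by-parts identity of Remark~\ref{remar2} to convert the advection contribution into a null (or sign-definite) boundary integral, after which the linear-algebraic pairing with the eigenvector $z$ of $A$ closes the inequality.
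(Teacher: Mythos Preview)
Your proposal is correct and follows essentially the same route as the paper: recast \eqref{P1a} as $U=tS_1(U)+G_1(U)$, identify $t_1^\alpha$ as the simple principal characteristic value via $U=\varphi_1^\alpha z$, apply Rabinowitz to get a global continuum, show it stays in the positive cone, and combine unboundedness with the a~priori bound (the paper's Lemma~\ref{L1}) to cover all $t>t_1^\alpha$; nonexistence is obtained by testing against the principal eigenfunction and using $\operatorname{div}\vec\alpha=0$.

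One technical point you should be aware of for the nonexistence step: since $A$ is \emph{not} assumed symmetric ($b\neq c$ in general), pairing the system directly with $\varphi_1^\alpha z$ for the right Perron eigenvector $z$ of $A$ does not immediately yield $\langle AU,z\rangle=\lambda_A\langle U,z\rangle$. The paper handles this by first substituting $w=\sigma v$ with $\sigma^2=b/c$, which replaces $A$ by the symmetric matrix $A_0=\begin{pmatrix}a&\hat b\\ \hat b&d\end{pmatrix}$ having the same largest eigenvalue $\lambda_A$, and only then tests against $X=\varphi_1^\alpha z$ with $z$ an eigenvector of $A_0$. Your alternative of using the \emph{adjoint} principal eigenfunction would work equally well, but then you must consistently pair with the \emph{left} Perron eigenvector of $A$; as written, mixing the adjoint eigenfunction of $L$ with the right eigenvector $z$ of $A$ would not close the inequality.
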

In order to prove this result, firstly we need some auxiliary lemmas. The following result provides a priori bounds for the positive solutions of \( P \).

\begin{lemma}\label{L1}
	Let $U=(u,v)$ be a positive solution to \eqref{P1a}, with $t\leq\Lambda$. Then,
	\begin{equation*}
		\|U\|_1\leq M_{\Lambda},
	\end{equation*}
	where $M_{\Lambda}$ is a positive constant.
\end{lemma}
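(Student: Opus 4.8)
The plan is to derive an a priori bound for positive solutions $U=(u,v)$ of \eqref{P1a} when $t \leq \Lambda$, proceeding in two stages: first controlling an $L^\infty$ (or at least integral) norm of the solution, and then upgrading to the $C^1$ norm via the elliptic regularity estimate recorded in the preliminaries. The key difficulty is that the nonlocal terms $\Phi_{(u,v)}$ and $\Psi_{(u,v)}$ are nonnegative and appear with a favorable sign on the left-hand side (absorptive), so they should \emph{help} rather than hinder; the main obstacle is extracting a bound that is uniform over all such solutions, since a priori a solution could be large where the kernel's positivity set is weak.

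First I would exploit the absorptive structure. Testing the first equation with $u$ and the second with $v$, and using Remark \ref{remar2} together with $\operatorname{div}\vec{\alpha}=0$ and $u=v=0$ on $\partial\Omega$, the advection terms integrate to zero:
\begin{equation*}
\int_{\Omega} (\vec{\alpha}\cdot\nabla u)\,u\,dx = \tfrac{1}{2}\int_{\Omega}\vec{\alpha}\cdot\nabla(u^2)\,dx = 0,
\end{equation*}
and similarly for $v$. This leaves
\begin{equation*}
\int_{\Omega}|\nabla u|^2\,dx + \int_{\Omega}\Phi_{(u,v)}u^2\,dx + \int_{\Omega}|\nabla v|^2\,dx + \int_{\Omega}\Psi_{(u,v)}v^2\,dx = t\int_{\Omega}\bigl((au+bv)u + (cu+dv)v\bigr)\,dx.
\end{equation*}
Since $\Phi_{(u,v)},\Psi_{(u,v)}\geq 0$, the two nonlocal integrals are nonnegative and the right-hand side is bounded by $C_{\Lambda}(\|u\|^2 + \|v\|^2)$ for a constant depending on $\Lambda$ and $a,b,c,d$; by Poincaré this controls $\|U\|^2_{H_0^1\times H_0^1}$ only up to the same $H^1$ norm on the right, which is \emph{not} immediately a bound. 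The crucial gain must come from the nonlocal term: using $(f_1)$ one has $\Phi_{(u,v)}(x)\geq \varepsilon_0\int_{\Omega}K_1(x,y)|u(y)|^{\gamma}dy$, so $\int_{\Omega}\Phi_{(u,v)}u^2\,dx \geq \varepsilon_0\int_{\Omega\times\Omega}K_1(x,y)|u(y)|^{\gamma}|u(x)|^2\,dx\,dy$, a coercive superlinear quantity that, via $(K_2)$ (equivalently Remark \ref{remar1}), dominates the quadratic right-hand side once $\|U\|$ is large, forcing an $L^{\gamma+2}$-type a priori bound on $U$.

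The hard part will be converting this nonlocal coercivity into a genuine uniform $L^\infty$ bound, since the positive quantity $\int K_1|u(y)|^\gamma|u(x)|^2$ mixes the values of $u$ at different points and does not directly dominate a local norm. Here I expect to invoke the corresponding estimate from Lima and Souto \cite{RM}, which the preliminaries explicitly permit us to use, to obtain a uniform bound on $\|\Phi_{(u,v)}\|_\infty$ and $\|\Psi_{(u,v)}\|_\infty$ (or equivalently on a suitable Lebesgue norm of $U$) for solutions with $t\leq\Lambda$. Granting such an $L^\infty$ bound $\|U\|_\infty \leq C_\Lambda$, the passage to $\|U\|_1$ is then routine: the right-hand sides $t(au+bv)$ and $t(cu+dv)$ lie in $L^\infty$ with norm bounded by $\Lambda(a+b+c+d)C_\Lambda$, the absorptive nonlocal coefficients $\Phi_{(u,v)}u$ and $\Psi_{(u,v)}v$ are likewise bounded in $L^\infty$ by $(\Phi_2)$, and applying the $C^1$ estimate $\|\omega\|_{C^1(\overline{\Omega})}\leq c_\infty\|h\|_\infty$ recorded just before the statement to each component yields
\begin{equation*}
\|U\|_1 \leq M_\Lambda,
\end{equation*}
with $M_\Lambda$ depending only on $\Lambda$, $\Omega$, $\vec{\alpha}$, the kernels, and $a,b,c,d$. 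I would close by remarking that the argument uses $t\leq\Lambda$ only to bound the linear right-hand side and the divergence-free hypothesis only to annihilate the advection term in the energy identity.
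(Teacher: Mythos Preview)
Your setup is fine --- the energy identity you write, with the advection term integrated away via $\operatorname{div}\vec\alpha=0$, is correct, and the final bootstrap from an $L^\infty$ bound to a $C^1$ bound is routine. The gap is in the middle step, which you yourself flag as ``the hard part'': you do not actually prove it, you defer to \cite{RM}. That reference treats a different system and does not supply a ready-made $L^\infty$ estimate for positive solutions of \eqref{P1a}; the preliminaries permit borrowing a few auxiliary facts from \cite{RM}, but the a priori bound here is proved in full in the paper and is not one of them. More to the point, your proposed route --- extracting a uniform $L^\infty$ bound directly from the nonlocal coercive quantity $\int_{\Omega\times\Omega}K_1(x,y)|u(y)|^\gamma|u(x)|^2\,dx\,dy$ --- is genuinely problematic, since $K_1$ is only required to be positive on small diagonal neighborhoods (condition $(K_2)$), and there is no obvious way to turn control of this mixed integral into a pointwise bound.

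The paper sidesteps this difficulty by arguing by contradiction and \emph{normalization}, exploiting the homogeneity hypothesis $(f_2)$ (equivalently $(\Phi_1)$), which you never invoke. One assumes $\|U_n\|_H\to\infty$ with $t_n\le\Lambda$, sets $\overline U_n=U_n/\|U_n\|_H$, and tests the equations with $\overline u_n/\|U_n\|_H^\gamma$ and $\overline v_n/\|U_n\|_H^\gamma$. Homogeneity gives $\Phi_{(u_n,v_n)}/\|U_n\|_H^\gamma=\Phi_{(\overline u_n,\overline v_n)}$, and since every other term in the resulting identity is $O(\|U_n\|_H^{-\gamma})$, one obtains $\int_\Omega\Phi_{(\overline u_n,\overline v_n)}\overline u_n^2\to0$ and likewise for $\Psi$. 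Fatou then forces the weak limit $(u,v)$ of $(\overline u_n,\overline v_n)$ to satisfy $\int_{\Omega\times\Omega}K_i(x,y)|w(y)|^\gamma|w(x)|^2\,dx\,dy=0$ for $w=u,v$, and \emph{now} Remark~\ref{remar1} (i.e.\ $(K_2)$) is used in its natural form to conclude $u=v=0$. Testing with $\overline u_n,\overline v_n$ themselves and using $\overline u_n,\overline v_n\to0$ in $L^2$ then contradicts $\|\overline U_n\|_H=1$. The key conceptual point you are missing is that $(K_2)$ is tailored to give a \emph{rigidity} statement (vanishing integral $\Rightarrow$ vanishing function), not a quantitative lower bound, and the contradiction--normalization framework is precisely what allows it to be used that way.
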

\begin{proof}
Set the norm in $H=H_{0}^{1}(\Omega)\times H_{0}^{1}(\Omega)$ as given by
$$\|U\|_H=\|u\|_{H_{0}^{1}(\Omega)}+\|v\|_{H_{0}^{1}(\Omega)} \ \mbox{for all  } U=(u,v) \in H.$$
Suppose, by contradiction, that there exists $(u_n,v_n)\subset H$ with $(u_n,v_n)>0$ and $(t_n)\subset[0,\Lambda]$ such that $\|U_n\|_H\rightarrow\infty$, where $U_n=(u_n,v_n)$. So
\begin{equation*}
\int_{\Omega}\nabla u_n\nabla\varphi dx+\int_{\Omega}(\vec{\alpha}(x)\cdot \nabla u_n)\varphi dx+\int_{\Omega}\Phi_{(u_n,v_n)}u_n\varphi dx=t_n\int_{\Omega}(au_n+bv_n)\varphi dx
\end{equation*}
and
\begin{equation*}
	\int_{\Omega}\nabla v_n\nabla\eta dx+\int_{\Omega}(\vec{\alpha}(x)\cdot \nabla v_n)\eta dx+\int_{\Omega}\Psi_{(u_n,v_n)}v_n\eta dx=t_n\int_{\Omega}(cu_n+dv_n)\eta dx
\end{equation*}
for all $\varphi,\eta\in H_0^1(\Omega)$. Thus, setting $\overline{u}_n=u_n/\|U_n\|$ e $\overline{v}_n=v_n/\|U_n\|$, we get
\begin{equation*}
	\int_{\Omega}\nabla \overline{u}_n\nabla\varphi dx+\int_{\Omega}(\vec{\alpha}(x)\cdot \nabla \overline{u}_n)\varphi dx+\int_{\Omega}\Phi_{(u_n,v_n)}\overline{u}_n\varphi dx=t_n\int_{\Omega}(a\overline{u}_n+b\overline{v}_n)\varphi dx
\end{equation*}
and
\begin{equation*}
	\int_{\Omega}\nabla \overline{v}_n\nabla\eta dx+\int_{\Omega}(\vec{\alpha}(x)\cdot \nabla \overline{v}_n)\eta dx+\int_{\Omega}\Psi_{(u_n,v_n)}\overline{v}_n\eta dx=t_n\int_{\Omega}(c\overline{u}_n+d\overline{v}_n)\eta dx.
\end{equation*}

Once $(\overline{u}_n)$ and $(\overline{v}_n)$ are bounded in $H_0^1(\Omega)$, along some subsequence, relabeled by $n$, 
\begin{eqnarray*}
	\overline{u}_n\rightharpoonup u\mbox{ in }H_{0}^{1}(\Omega),\overline{u}_n\rightarrow u\mbox{ in }L^2(\Omega)\mbox{ and }\overline{u}_n(x)\rightarrow u(x)\mbox{ a.e. in }\Omega\\
	\overline{v}_n\rightharpoonup v\mbox{ in }H_{0}^{1}(\Omega),\overline{v}_n\rightarrow v\mbox{ in }L^2(\Omega)\mbox{ and }\overline{v}_n(x)\rightarrow v(x)\mbox{ a.e. in }\Omega.
\end{eqnarray*}
Taking $\varphi=\overline{u}_n/\|U_n\|_H^{\gamma}$ and $\eta=\overline{v}_n/\|U_n\|_H^{\gamma}$ as test functions, and recalling that $t^{\gamma}\Phi_{(u_n,v_n)}=\Phi_{(tu_n,tv_n)}$ and  $t^{\gamma}\Psi_{(u_n,v_n)}=\Psi_{(tu_n,tv_n)}$, for all $t>0$, we derive
\begin{eqnarray*}
	\frac{1}{\|U_n\|_{H}^{\gamma}}\|\overline{u}_n\|^{2}_{H_{0}^{1}(\Omega)}+\frac{1}{\|U_n\|_{H}^{\gamma}}\int_{\Omega}(\vec{\alpha}(x)\cdot \nabla \overline{u}_n) \overline{u}_n dx+\int_{\Omega}\Phi_{(\overline{u}_n,\overline{v}_n)}\overline{u}_n^2dx=t_n\int_{\Omega}(a\overline{u}_n+b\overline{v}_n)\frac{\overline{u}_n}{\|U_n\|_{H}^{\gamma}}dx\\
	\frac{1}{\|U_n\|_{H}^{\gamma}}\|\overline{v}_n\|^{2}_{H_{0}^{1}(\Omega)}+\frac{1}{\|U_n\|_{H}^{\gamma}}\int_{\Omega}(\vec{\alpha}(x)\cdot \nabla \overline{v}_n) \overline{v}_n dx+\int_{\Omega}\Psi_{(\overline{u}_n,\overline{v}_n)}\overline{v}_n^2dx=t_n\int_{\Omega}(c\overline{u}_n+d\overline{v}_n)\frac{\overline{v}_n}{\|U_n\|_{H}^{\gamma}}dx.
\end{eqnarray*}
Therefore, using Hölder's inequality and the inequalities above, we get
\begin{equation*}
	\lim_{n\rightarrow\infty}\int_{\Omega}\Phi_{(\overline{u}_n,\overline{v}_n)}\overline{u}_n^2dx=\lim_{n\rightarrow\infty}\int_{\Omega}\Psi_{(\overline{u}_n,\overline{v}_n)}\overline{v}_n^2dx=0.
\end{equation*}
By Fatou's lemma,
\begin{eqnarray*}
0\leq\int_{\Omega}\Phi_{(u,v)}u^2dx\leq\lim_{n\rightarrow\infty}\int_{\Omega}\Phi_{(\overline{u}_n,\overline{v}_n)}\overline{u}_n^2dx=0 \mbox{ and }
0\leq\int_{\Omega}\Psi_{(u,v)}v^2dx\leq\lim_{n\rightarrow\infty}\int_{\Omega}\Psi_{(\overline{u},\overline{v}_n)}\overline{v}_n^2dx=0.
\end{eqnarray*}
Hence,
\begin{equation*}
\int_{\Omega\times\Omega}K_1(x,y)f(|u(y)|,|v(y)|)|u(x)|^2dxdy=\int_{\Omega\times\Omega}K_2(x,y)g(|u(y)|,|v(y)|)|v(x)|^2dxdy=0.
\end{equation*}
Thus, by $(f_1)$,
\begin{eqnarray*}
0\leq\varepsilon_0\int_{\Omega\times\Omega}K_1(x,y)|u(y)|^{\gamma}|u(x)|^2dxdy\leq\int_{\Omega\times\Omega}K_1(x,y)f(|u(y)|,|v(y)|)|u(x)|^2dxdy=0,\\
0\leq\varepsilon_0\int_{\Omega\times\Omega}K_2(x,y)|v(y)|^{\gamma}|v(x)|^2dxdy\leq\int_{\Omega\times\Omega}K_2(x,y)g(|u(y)|,|v(y)|)|v(x)|^2dxdy=0.
\end{eqnarray*}
	 Consequently,
\begin{equation*}
\int_{\Omega\times\Omega}K_1(x,y)|u(y)|^{\gamma}|u(x)|^2dxdy=\int_{\Omega\times\Omega}K_2(x,y)|v(y)|^{\gamma}|v(x)|^2dxdy=0.
\end{equation*}
Since $K_1,K_2\in\mathcal{K}$, using Remark \ref{remar1}, we get that $u=v=0$. Thus, $(\overline{u}_n)$ and $(\overline{v}_n)$ converge to 0 in $L^{2}(\Omega)$. Considering $\varphi=\overline{u}_n$ and $\eta=\overline{v}_n$ as test functions, we get that
\begin{eqnarray*}
\int_{\Omega}|\nabla\overline{u}_n|^2dx+\int_{\Omega}(\vec{\alpha}(x)\cdot \nabla \overline{u}_n)\overline{u}_n dx+\int_{\Omega}\Phi_{(u_n,v_n)}\overline{u}_n^2dx=t_n\int_{\Omega}(a\overline{u}_n+b\overline{v}_n)\overline{u}_ndx \mbox{ and }\\
\int_{\Omega}|\nabla\overline{v}_n|^2dx+\int_{\Omega}(\vec{\alpha}(x)\cdot \nabla \overline{v}_n)\overline{v}_n dx+\int_{\Omega}\Psi_{(u_n,v_n)}\overline{v}_n^2dx=t_n\int_{\Omega}(c\overline{u}_n+d\overline{v}_n)\overline{v}_ndx.
\end{eqnarray*}	
As $(t_n)$ is bounded by $\Lambda$, we get
\begin{eqnarray*}
\int_{\Omega}|\nabla\overline{u}_n|^2dx\leq \Lambda\left[a\int_{\Omega}|\overline{u}_n|^2dx+b\int_{\Omega}|\overline{u}_n\overline{v}_n|dx\right]\mbox{ and }
\int_{\Omega}|\nabla\overline{v}_n|^2dx\leq \Lambda\left[c\int_{\Omega}|\overline{u}_n\overline{v}_n|dx+d\int_{\Omega}|\overline{v}_n|^2dx\right].
\end{eqnarray*}
Consequently, $\|(\overline{u}_n,\overline{v}_n)\|_H\rightarrow0$, which contradicts the fact that $\|(\overline{u}_n,\overline{v}_n)\|_H=1$ for all $n\in\N$. Thus, $(u_n,v_n)$ is bounded in $H$. By elliptic regularity, we obtain that $(u_n,v_n)$ is bounded in $E_1$, completing the proof.
\end{proof}

Now, we present a nonexistence result.

\begin{lemma}\label{lem:nosolution}
	The problem \eqref{P1a} does not admit a positive solution if $t\leq t_1^\alpha= \lambda_1^{\alpha}/\lambda_A$.
\end{lemma}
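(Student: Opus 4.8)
The plan is to test the equations against the principal eigenfunction of the \emph{adjoint} operator and then collapse the two resulting scalar inequalities into one by exploiting the Perron--Frobenius structure of $A$. Since $\operatorname{div}\vec{\alpha}=0$, the formal adjoint of $L_0:=-\Delta+\vec{\alpha}\cdot\nabla$ (with Dirichlet conditions) is $L_0^{*}=-\Delta-\vec{\alpha}\cdot\nabla$, and both operators share the principal eigenvalue $\lambda_1^{\alpha}$; I would fix a principal eigenfunction $\phi^{*}\in C^1_0(\overline{\Omega})$, $\phi^{*}>0$ in $\Omega$, of $L_0^{*}$, so that $-\Delta\phi^{*}-\vec{\alpha}\cdot\nabla\phi^{*}=\lambda_1^{\alpha}\phi^{*}$. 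Because $A$ has positive entries, Perron--Frobenius provides a strictly positive left eigenvector $\psi=(\psi_1,\psi_2)$ with $\psi A=\lambda_A\psi$.

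I argue by contradiction: suppose $U=(u,v)$ is a positive solution of \eqref{P1a} for some $t\le t_1^{\alpha}$. Form the positive combination $W:=\psi_1 u+\psi_2 v>0$ in $\Omega$. Multiplying the two equations by $\psi_1$ and $\psi_2$ and adding, the reaction terms recombine through $\psi A=\lambda_A\psi$ into
$$
-\Delta W+\vec{\alpha}\cdot\nabla W=t\lambda_A W-g,\qquad g:=\psi_1\Phi_{(u,v)}u+\psi_2\Psi_{(u,v)}v\ge 0 .
$$
Testing this identity against $\phi^{*}$ and integrating by parts, the boundary terms vanish (both $W$ and $\phi^{*}$ lie in $H_0^1(\Omega)$), and the divergence-free condition lets me transfer the whole operator onto $\phi^{*}$:
$$
\int_{\Omega}(-\Delta W+\vec{\alpha}\cdot\nabla W)\phi^{*}\,dx=\int_{\Omega}W(-\Delta\phi^{*}-\vec{\alpha}\cdot\nabla\phi^{*})\,dx=\lambda_1^{\alpha}\int_{\Omega}W\phi^{*}\,dx .
$$
Combining the two displays yields $\int_{\Omega}g\phi^{*}\,dx=(t\lambda_A-\lambda_1^{\alpha})\int_{\Omega}W\phi^{*}\,dx$.

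It remains to show $\int_{\Omega}g\phi^{*}\,dx>0$, which is the step where the nonlocal hypotheses enter and which forces the \emph{strict} inequality needed to exclude the endpoint $t=t_1^{\alpha}$. Using $(f_1)$,
$$
\int_{\Omega}\Phi_{(u,v)}u\phi^{*}\,dx\ge\varepsilon_0\int_{\Omega\times\Omega}K_1(x,y)|u(y)|^{\gamma}u(x)\phi^{*}(x)\,dy\,dx\ge 0 ,
$$
and were this to vanish, the nonnegative integrand would vanish a.e.; since $u(y)^{\gamma}>0$ and $u(x)\phi^{*}(x)>0$ a.e.\ in $\Omega$, this would force $K_1=0$ a.e.\ on $\Omega\times\Omega$, contradicting $(K_2)$. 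Hence $\int_{\Omega}\Phi_{(u,v)}u\phi^{*}\,dx>0$, and likewise $\int_{\Omega}\Psi_{(u,v)}v\phi^{*}\,dx>0$, so $\int_{\Omega}g\phi^{*}\,dx>0$. As $\int_{\Omega}W\phi^{*}\,dx>0$, the last identity gives $t\lambda_A-\lambda_1^{\alpha}>0$, i.e.\ $t>\lambda_1^{\alpha}/\lambda_A=t_1^{\alpha}$, contradicting $t\le t_1^{\alpha}$.

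The main obstacle is the non--self--adjointness of $L_0$: one cannot simply pair with the eigenfunction of $L_0$ itself, so the argument hinges on having the adjoint principal eigenfunction $\phi^{*}$ together with the matching fact $\lambda_1^{\alpha}(L_0)=\lambda_1^{\alpha}(L_0^{*})$ (Krein--Rutman for the non-symmetric operator). Pairing the vectorial comparison with the positive left eigenvector $\psi$ of $A$ is what reduces the $2\times 2$ system to a single scalar inequality, and the positivity of the nonlocal integral — a consequence of $(f_1)$ and $(K_2)$ — is precisely what upgrades the comparison to a strict one and thereby rules out the boundary case $t=t_1^{\alpha}$.
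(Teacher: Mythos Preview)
Your proof is correct and shares the paper's core strategy---pair the system with the product of a principal eigenfunction of the scalar operator and a positive eigenvector of the matrix, then compare eigenvalues---but the implementations differ in two technical respects. For the matrix, the paper first rescales $v\mapsto\sigma v$ with $\sigma^{2}=b/c$ to replace $A$ by a \emph{symmetric} matrix $A_{0}$ having the same top eigenvalue $\lambda_{A}$, and then pairs with a right eigenvector of $A_{0}$; you instead use a strictly positive \emph{left} Perron eigenvector $\psi$ of $A$ directly, which is an equivalent (and slightly more streamlined) way to collapse the $2\times2$ coupling. For the differential operator, the paper pairs with the principal eigenfunction $\varphi_{1}^{\alpha}$ of $-\Delta+\vec{\alpha}\cdot\nabla$ and invokes self-adjointness of $L$ under $\operatorname{div}\vec{\alpha}=0$, whereas you pair with the principal eigenfunction $\phi^{*}$ of the formal adjoint $-\Delta-\vec{\alpha}\cdot\nabla$ and use that the two operators share the principal eigenvalue $\lambda_{1}^{\alpha}$; this sidesteps the self-adjointness issue entirely. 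Your explicit verification that $\int_{\Omega}g\,\phi^{*}\,dx>0$ from $(f_{1})$ and $(K_{2})$ also makes the exclusion of the endpoint $t=t_{1}^{\alpha}$ more transparent than in the paper, where the strict inequality is asserted without elaboration.
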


\begin{proof}
If $(t,U)$ is a positive solution to \eqref{P1a},  we have
$$
\left\{
\begin{array}{lcl}
	-\Delta u+\vec{\alpha}(x)\cdot \nabla u+\Phi_{(u,v)}u&=&t\left[au+\frac{b}{\sigma}(\sigma v)\right]\quad\mbox{in}\quad\Omega,\\
	-\Delta (\sigma v)+\vec{\alpha}(x)\cdot \nabla (\sigma v)+\Psi_{(u,v)}(\sigma v)&=&t[(c\sigma)u+d(\sigma v)]\quad\mbox{in}\quad\Omega,\\
	\qquad \qquad \qquad \qquad \qquad \qquad	u, v &>&0\quad\mbox{in}\quad\Omega,\\
\qquad \qquad \qquad \qquad \qquad \qquad	u=v&=&0\quad\mbox{on}\quad\partial\Omega,
\end{array}
\right.
$$
for all $\sigma>0$. In particular, for $\sigma^2=b/c$, we may fix $w=\sigma v$ and observe that $\hat{b}:=b/\sigma=c\sigma$. Hence, $0< U_0=(u,w)\in E$ is solution to the problem 
\begin{equation}\label{u0}
\left\{
\begin{array}{lcl}
	-\Delta u+\vec{\alpha}(x)\cdot \nabla u+\Phi_{(u,v)}u&=&t[au+\hat{b}w],\quad\mbox{in}\quad\Omega\\
	-\Delta w+\vec{\alpha}(x)\cdot \nabla w+\Psi_{(u,v)}w&=&t[\hat{b}u+dw],\quad\mbox{in}\quad\Omega\\
\quad \qquad \qquad \qquad \qquad	u=w&=&0,\quad\mbox{on}\quad\partial\Omega.
\end{array}
\right.
\end{equation}
Clearly, $$A_0=\left(\begin{array}{cc}a & \hat{b}\\ \hat{b} & d\end{array}\right)$$
is a symmetric matrix, whose largest eigenvalue is $\lambda_A>0$, as well in the matrix $A$. Now, we consider $z=(z_1,z_2)>0$, an eigenvector of $A_0$ associated to $\lambda_A$  and $\varphi_1^{\alpha}>0$ an eigenfunction of $-\Delta+\vec{\alpha}\cdot\nabla$ associated to $\lambda_1^{\alpha}$. By a direct calculation we find that
$$
L(X)=t_1^{\alpha} A_0X,
$$
where $X:=\varphi_1^{\alpha}z$. Now we define
$$
\mathfrak{P}_{U_0}:= (\Phi_{(u,v)}u,\Psi_{(u,v)}w). 
$$
With this notation, the system (\ref{u0}) can be rewritten as:
$$
L(U_0)=tA_0U_0-\mathfrak{P}_{U_0}.
$$
Calculating the inner product with \( X \) and integrating over \( \Omega \), we obtain 
\begin{equation*}
\int_{\Omega}\langle L(U_0),X\rangle dx=t\int_{\Omega}\langle A_0U_0,X\rangle dx-\int_{\Omega}\langle\mathfrak{P}_{U_0},X \rangle<t\int_{\Omega}\langle A_0U_0,X\rangle dx.
\end{equation*}
Since $\mathrm{div}\,\vec{\alpha} = 0$, $L$ is self-adjoint. Consequently,
\begin{equation*}
\int_{\Omega}\langle L(U_0),X\rangle dx=	\int_{\Omega}\langle U_0,L(X)\rangle dx=t_1^{\alpha}\int_{\Omega}\langle U_0,A_0X\rangle dx=t_1^{\alpha}\int_{\Omega}\langle A_0U_0,X\rangle dx,
\end{equation*}
and so
\begin{equation*}
t_1^{\alpha}\int_{\Omega}\langle A_0U_0,X\rangle dx<t\int_{\Omega}\langle A_0U_0,X\rangle dx.
\end{equation*}
Furthermore, observing that $\int_{\Omega}\langle A_0U_0,X\rangle dx>0$, we have $t>t_1^{\alpha}$, concluding the proof.
\end{proof}

Now, we will apply the global bifurcation theorems to obtain an unbounded continuum of positive solutions to \eqref{P1a}. To this end, we first observe that, by elliptic regularity, \( U \in E_1 \) is a classical positive solution of \eqref{P1a} if and only if \( \mathfrak{F}(t,U) = 0 \), where \( \mathfrak{F}:\mathbb{R} \times E_1 \to E_1 \) is given by:

$$
\mathfrak{F}(t,U) = U-tS_1(U) -G_1(U),
$$
where $S_1,G_1:E_1 \to E_1$ are the operators defined in Section 2. Moreover, we denote
 $$
\Sigma=\overline{\{(t,U)\in\R\times E_1:\mathfrak{F}(t,U)=0, \,U\neq0\}}.
$$

With these considerations, we obtain the following result.

\begin{proposition}
There exists a component $\mathcal{C}=\mathcal{C}_{t_1^{\alpha}}\subset\Sigma$ of solutions to problem \eqref{P1a} emanating from the trivial solution at $(t_1^\alpha,0)$ which satisfies one of the following non-excluding options: either (i) $\mathcal{C}$ is unbounded $\mathbb{R} \times E_1$ or (ii) there exists $\hat{t} \neq t_1^\alpha$ such that $(\hat{t},0)\in \mathcal{C}$.
\end{proposition}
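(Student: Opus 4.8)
The plan is to apply Rabinowitz's global bifurcation theorem \cite{Rabinowitz} to the equation $\mathfrak{F}(t,U)=U-tS_1(U)-G_1(U)=0$. By the properties established in Section 2, the operator $S_1:E_1\to E_1$ is linear and compact, while $G_1:E_1\to E_1$ is compact, continuous and satisfies $G_1(U)=o(\|U\|_1)$ as $U\to0$. Thus the equation has precisely the form $U=tS_1(U)+G_1(U)$ required by the theorem, and $(t,0)$ is a solution for every $t\in\mathbb{R}$. It then remains to exhibit a bifurcation point on the trivial branch and to check that the associated characteristic value of $S_1$ has odd algebraic multiplicity; the dichotomy (i)--(ii) is exactly the conclusion furnished by the theorem.

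First I would identify $t_1^\alpha$ as a characteristic value of $S_1$. Taking $X:=\varphi_1^\alpha z$, where $\varphi_1^\alpha>0$ is the principal eigenfunction of $-\Delta+\vec{\alpha}\cdot\nabla$ and $z=(z_1,z_2)>0$ is the eigenvector of $A$ associated with $\lambda_A$, the same direct computation used in the proof of Lemma \ref{lem:nosolution} gives $L(X)=t_1^\alpha A X$, that is, $X=t_1^\alpha S_1(X)$. Hence $1/t_1^\alpha$ is an eigenvalue of $S_1$ with eigenfunction $X$, so $(t_1^\alpha,0)$ is a candidate bifurcation point.

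The crux is to show that this characteristic value is algebraically simple, and this is the step I expect to be the main obstacle. For the geometric part I would diagonalize $A$ (its eigenvalues are real and distinct because $b,c>0$) and expand any solution of $L(Y)=t_1^\alpha A Y$ in the eigenbasis of $A$: the component along $z$ satisfies $L(y_1)=\lambda_1^\alpha y_1$, while the component along the second eigenvector satisfies $L(y_2)=t_1^\alpha\lambda_2 y_2$ with $\lambda_2<\lambda_A$. Since $t_1^\alpha\lambda_2<\lambda_1^\alpha=\min\sigma(-\Delta+\vec{\alpha}\cdot\nabla)$, the value $t_1^\alpha\lambda_2$ cannot be an eigenvalue of $L$, forcing $y_2=0$; hence $\ker(I-t_1^\alpha S_1)=\mathrm{span}\{X\}$ is one-dimensional. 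To upgrade geometric to algebraic simplicity I would rule out generalized eigenvectors by the Fredholm solvability condition. Because $\operatorname{div}\vec{\alpha}=0$ makes $L$ self-adjoint, the adjoint eigenfunction is $V_0=\varphi_1^\alpha z^*$, where $z^*$ is the eigenvector of $A^{T}$ associated with $\lambda_A$, and the equation $(I-t_1^\alpha S_1)W=X$ is solvable only if $\langle X,V_0\rangle_{L^2\times L^2}=0$. However $\langle X,V_0\rangle=(z\cdot z^*)\int_\Omega(\varphi_1^\alpha)^2\,dx\neq0$, since $z\cdot z^*\neq0$ for the simple eigenvalue $\lambda_A$; thus no generalized eigenvector exists and the algebraic multiplicity equals $1$, which is odd. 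Elliptic regularity guarantees that this multiplicity, computed in $L^2\times L^2$, coincides with the one in $E_1$.

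Finally, with odd algebraic multiplicity established, Rabinowitz's theorem produces a connected component $\mathcal{C}=\mathcal{C}_{t_1^\alpha}\subset\Sigma$ of nontrivial solutions emanating from $(t_1^\alpha,0)$ which either is unbounded in $\mathbb{R}\times E_1$ or meets the trivial branch again at some $(\hat t,0)$ with $\hat t\neq t_1^\alpha$. This is exactly the asserted alternative, completing the plan. The non-symmetry of $A$ is what makes the solvability step delicate: the pairing must be tracked through the left eigenvector $z^*$ rather than $z$ itself, and the non-degeneracy $z\cdot z^*\neq0$ is precisely where simplicity of $\lambda_A$ is used.
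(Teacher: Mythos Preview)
Your approach is exactly the paper's: both reduce to Rabinowitz's global bifurcation theorem after recording that $G_1(U)=o(\|U\|_1)$ and that $t_1^\alpha$ is a characteristic value of $S_1$ of algebraic multiplicity one; the paper simply asserts the multiplicity statement (implicitly deferring to \cite{RM}), while you supply the verification via diagonalisation of $A$ and the Fredholm solvability test. One small correction worth making: when $\operatorname{div}\vec{\alpha}=0$ the scalar operator $-\Delta+\vec{\alpha}\cdot\nabla$ is \emph{not} self-adjoint---its $L^2$ adjoint is $-\Delta-\vec{\alpha}\cdot\nabla$---so the adjoint eigenfunction should be $\psi_1^\alpha z^*$ with $\psi_1^\alpha>0$ the principal eigenfunction of the adjoint operator; since $\int_\Omega\varphi_1^\alpha\psi_1^\alpha\,dx>0$, the non-degeneracy pairing $(z\cdot z^*)\int_\Omega\varphi_1^\alpha\psi_1^\alpha\,dx\neq 0$ survives and your conclusion is unaffected.
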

\begin{proof}
    This result follows from the classical global bifurcation results of Rabinowitz \cite{Rabinowitz}. Indeed, under assumptions \((f_2)\) and \((\Phi_2)\), we have that \( G_1(U) = o(\|U\|_1) \) as \( U \to 0 \) (see Section 2). Moreover, note that \( t \) is a characteristic value of \( S_1 \) if and only if \( t \) is an eigenvalue of \( L \). Since \( t = t_1^\alpha \) is an eigenvalue of \( L \) with algebraic multiplicity $1$, the result follows from \cite[Theorem 1.3]{Rabinowitz}.
\end{proof}

In what follows, we will show that \( \mathcal{C} \) has an unbounded subcontinnum of positive solutions to \eqref{P1a}. To this end, we will need the following auxiliary results.

\begin{lemma}\label{sinal}
	There exists $\delta>0$ such that, if $(t,U)\in\mathcal{C}$ with $|t-t_1^{\alpha}|+\|U\|_1<\delta$ and $U\neq0$, then $U$ has a defined sign, that is, either
	$$
	U(x)>0, \quad \forall x\in\Omega\quad \mbox{or} \quad U(x)<0,\quad \forall x\in\Omega.
	$$
\end{lemma}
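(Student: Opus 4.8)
The plan is to exploit the bifurcation structure near $(t_1^\alpha,0)$ together with the simplicity of the eigenvalue $t_1^\alpha$. Since $t_1^\alpha$ is a characteristic value of $S_1$ of algebraic multiplicity one, the local bifurcation theorem of Crandall--Rabinowitz (or the structure built into Rabinowitz's theorem) gives that, in a neighborhood of $(t_1^\alpha,0)$, the nontrivial solutions in $\mathcal{C}$ form a curve of the form $U = sX + o(s)$ in $E_1$, where $s$ is a small real parameter and $X = \varphi_1^\alpha z$ is the eigenfunction spanning the one-dimensional kernel of $I - t_1^\alpha S_1$. The crucial point is that $X$ has a strict sign in $\Omega$: both components are $\varphi_1^\alpha z_1$ and $\varphi_1^\alpha z_2$, and since $\varphi_1^\alpha>0$ in $\Omega$ (principal eigenfunction) and $z_1,z_2>0$ (eigenvector associated to the largest eigenvalue $\lambda_A$), we have $X(x)>0$ for all $x\in\Omega$, with the normal derivative strictly negative on $\partial\Omega$ by the Hopf lemma.

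First I would fix $\delta_0>0$ small enough that the local bifurcation description is valid, so that every $(t,U)\in\mathcal{C}$ with $0<|t-t_1^\alpha|+\|U\|_1<\delta_0$ and $U\neq 0$ can be written as $U = sX + \psi(s)$ with $|s|$ small and $\|\psi(s)\|_1 = o(|s|)$. Then I would work in the $C^1_0(\overline{\Omega})$-topology: since $X$ lies in the interior of the positive cone of $E_1$ (strictly positive interior, negative outward normal derivative on the boundary), there is an open $C^1$-ball around $X$ contained in that cone. The normalization $U/s = X + \psi(s)/s \to X$ in $E_1$ as $s\to 0$ means that for $|s|$ below some threshold, $U/s$ lies in this ball, hence $U/s$ is strictly positive; consequently $U>0$ when $s>0$ and $U<0$ when $s<0$. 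Shrinking to a possibly smaller $\delta$ to absorb the threshold on $s$, every such $U$ has a definite sign in $\Omega$.

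The main obstacle is making the passage from the abstract bifurcation expansion to the cone argument rigorous, and in particular verifying that the interior-of-the-cone property is the right notion: one must know that $X\in\operatorname{int}(P)$, where $P$ is the positive cone of $E_1=C^1_0(\overline{\Omega})\times C^1_0(\overline{\Omega})$, and that being in this interior genuinely is a $C^1$-open condition (this is exactly where $C^1_0$ rather than $C^0$ is needed, and where the Hopf lemma enters). I would make sure to invoke elliptic regularity so that the continuum $\mathcal{C}$ lives in $E_1$ and the local expansion is controlled in the $C^1$-norm, since the positivity we want is a $C^1$-interior statement. A secondary technical point is ruling out the degenerate possibility that $U=0$ while $s\neq 0$; this is handled by the local bifurcation theorem itself, which guarantees that near $(t_1^\alpha,0)$ the only solutions are the trivial branch and the curve $sX+\psi(s)$, so that $U\neq 0$ forces $s\neq 0$ and the sign of $U$ is determined by the sign of $s$.
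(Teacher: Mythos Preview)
Your core idea---normalize $U$, show the rescaled solution converges in $E_1$ to the eigenfunction $X=\varphi_1^\alpha z$, and use that $X$ lies in the interior of the positive cone of $C_0^1(\overline\Omega)\times C_0^1(\overline\Omega)$---is exactly the mechanism the paper uses. The difference is in how the convergence is obtained. The paper does \emph{not} invoke Crandall--Rabinowitz; it argues by sequences: take $(t_n,U_n)\in\mathcal C$ with $t_n\to t_1^\alpha$, $U_n\neq0$, $\|U_n\|_1\to0$, set $W_n=U_n/\|U_n\|_1$, use $G_1(U)=o(\|U\|_1)$ to write $W_n=t_nS_1(W_n)+o_n(1)$, and extract via compactness of $S_1$ a limit $W$ with $\|W\|_1=1$ solving $LW=t_1^\alpha AW$. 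Then $W$ is (up to sign) a positive multiple of $X$, and the interior-of-the-cone argument finishes exactly as you outline.

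Your route through Crandall--Rabinowitz would give the cleaner curve description $U=sX+\psi(s)$, but it requires the nonlinear part $G_1$ to be $C^1$. Under the standing hypotheses $(f_0)$--$(f_2)$ of this section, $f$ and $g$ are merely continuous, so $G_1$ is only known to be continuous with $G_1(U)=o(\|U\|_1)$; the paper introduces the extra smoothness assumption $(f_3)$ only in Section~5, precisely in order to apply Crandall--Rabinowitz there. So as written your argument has a regularity gap at the step where you claim the local curve parametrization. The paper's compactness argument sidesteps this entirely: it needs nothing beyond compactness of $S_1$ and $G_1(U)=o(\|U\|_1)$, both of which hold under $(f_0)$--$(f_2)$. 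Replacing your Crandall--Rabinowitz step by this sequential argument makes your proof go through without any additional hypothesis.
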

\begin{proof}
	It is enough to prove that for any pair of sequences $(U_n) \subset E_1$ and $t_{n}\to t_1^{\alpha}$ with 
	$$
	U_n\neq0,\quad \|U_n\|_1\to 0 \quad \mbox{and } \quad U_{n}=t_{n}S_1(U_n)+G_1(U_n),
	$$
	$U_n$ does not change sign for sufficiently large $n$.
	Setting $W_n=U_n/\|U_n\|_1$, we have that
	$$W_n=t_nS_1(W_n)+\frac{G_1(U_n)}{\|U_n\|_1}=t_nS_1(W_n)+o_n(1).$$
	From the compactness of the operator $S_1$, up to subsequence, we can assume that $(S_1(W_n))$ is convergent. Then, $W_n\to W$ in $E_1$ for some $W \in E_1$, with $\|W\|_1=1$. Consequently, 
	$$
	\left\{\begin{array}{lcl}
		LW&=&t_1^{\alpha}AW\mbox{ in }\ \Omega,\\
		W&=&0\mbox{ on }\ \partial\Omega.
	\end{array}\right.
	$$	
	Since $W\neq0$, by \cite[Lemmas 3.1 and 3.2]{RM}, we conclude that
	$W(x)>0\mbox{ or }W(x)<0,\mbox{ for all }x\in\Omega.$
	Therefore, without loss of generality, we may assume $W>0$ in $\Omega$, consequently, $W_n>0$ in $\Omega$ for $n$ large enough. Since  $U_n$ and $W_n$ have the same sign, it follows that $U_n$ is also positive, completing the proof.
\end{proof}

Now, we will provide a more precise description of the solutions that belong to \(\mathcal{C}\).

\begin{lemma} It holds that
$$\mathcal{C}= \mathcal{C}^+ \cap \mathcal{C}^-,$$
where 
$
    \mathcal{C}^+:=\{(t,U)\in \mathcal{C};\, U(x)>0 ~~\forall x \in \Omega \} \cup \{(t_1^\alpha,0)\}$ 
    and $
    \mathcal{C}^-:=\{(t,U)\in \mathcal{C};\, (t,-U)\in \mathcal{C}^+\}$.    
\end{lemma}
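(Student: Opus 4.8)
The plan is to read the claim as the statement that the continuum $\mathcal{C}$ splits into its strictly positive and strictly negative parts, $\mathcal{C}=\mathcal{C}^+\cup\mathcal{C}^-$, and to prove it by a connectedness argument: I will show that $\mathcal{A}:=\mathcal{C}^+\cup\mathcal{C}^-$ is nonempty, relatively open and relatively closed in $\mathcal{C}$; since $\mathcal{C}$ is connected, this forces $\mathcal{A}=\mathcal{C}$. The two structural facts that make this work are that the system \eqref{P1a} is cooperative (the off-diagonal entries $b,c$ of $A$ are positive), so that the strong maximum principle and the Hopf boundary-point lemma apply to the pair $U=(u,v)$ as a whole, and that $t_1^\alpha$ is a simple principal eigenvalue, the only characteristic value admitting a one-signed eigenfunction.

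Nonemptiness is immediate: $(t_1^\alpha,0)\in\mathcal{A}$, and by Lemma \ref{sinal} every $(t,U)\in\mathcal{C}$ with $U\neq0$ sufficiently close to $(t_1^\alpha,0)$ has a definite sign, hence lies in $\mathcal{A}$. For openness I distinguish two cases. If $(t_0,U_0)\in\mathcal{A}$ with $U_0\neq0$, then $U_0$ is one-signed; by the strong maximum principle together with the Hopf lemma, $U_0$ lies in the interior of the positive (or negative) cone of $E_1=C^1_0(\overline{\Omega})\times C^1_0(\overline{\Omega})$, that is $U_0>0$ in $\Omega$ with $\partial_\eta U_0<0$ on $\partial\Omega$ componentwise. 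Since this interior is open in $E_1$, every solution of $\mathfrak{F}(t,U)=0$ in $\mathcal{C}$ close enough to $(t_0,U_0)$ is again one-signed, so a relative neighborhood of $(t_0,U_0)$ in $\mathcal{C}$ lies in $\mathcal{A}$. If instead $(t_0,U_0)=(t_1^\alpha,0)$, openness of $\mathcal{A}$ at this point is exactly the content of Lemma \ref{sinal}.

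For closedness, let $(t_n,U_n)\in\mathcal{A}$ converge to $(t_0,U_0)\in\mathcal{C}$; passing to a subsequence we may assume all $U_n$ lie on the same branch, say $U_n\geq0$. If $U_0\neq0$, then $U_0\geq0$ and $U_0\not\equiv0$, so the strong maximum principle again gives $U_0>0$ in $\Omega$ and $(t_0,U_0)\in\mathcal{C}^+\subset\mathcal{A}$. If $U_0=0$, I rescale as in Lemma \ref{sinal}: setting $W_n=U_n/\|U_n\|_1$ and using $G_1(U_n)=o(\|U_n\|_1)$ together with the compactness of $S_1$, a subsequence of $(W_n)$ converges in $E_1$ to some $W$ with $\|W\|_1=1$, $W\geq0$, solving $LW=t_0AW$ in $\Omega$ and $W=0$ on $\partial\Omega$. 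By \cite[Lemmas 3.1 and 3.2]{RM}, a nonnegative nontrivial eigenfunction can only correspond to the principal eigenvalue, whence $t_0=t_1^\alpha$ and $(t_0,U_0)=(t_1^\alpha,0)\in\mathcal{A}$. Thus $\mathcal{A}$ is closed in $\mathcal{C}$.

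The main obstacle is this closedness step, and within it the exclusion of spurious trivial limit points: a priori the positive branch could return to the trivial solution at some $t_0\neq t_1^\alpha$, and ruling this out is precisely what the rescaling argument together with the simplicity of $t_1^\alpha$ achieves. The cooperativity of the system is the other essential ingredient, since it is what promotes ``nonnegative and nonzero'' to ``strictly positive in both components'' in both the openness and the closedness arguments. Having shown that $\mathcal{A}$ is nonempty, relatively open and relatively closed in the connected set $\mathcal{C}$, I conclude $\mathcal{C}=\mathcal{A}=\mathcal{C}^+\cup\mathcal{C}^-$.
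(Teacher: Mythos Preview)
Your proof is correct and uses the same essential ingredients as the paper: the strong maximum principle and Hopf lemma for the cooperative system, the symmetry $(t,U)\leftrightarrow(t,-U)$, the rescaling/blow-down near the trivial branch, and the simplicity of $t_1^\alpha$ as a principal eigenvalue. The organization, however, is different. You package everything into a single clopen-in-a-connected-set argument, whereas the paper proceeds by a sequence of explicit exclusions: first ruling out semi-trivial pairs $(t,u,0)$ and $(t,0,v)$ directly from the equations, then invoking \cite[Remark~4.1]{RM} to exclude solutions with components of opposite sign, and finally appealing to the strong maximum principle and the $U\mapsto -U$ symmetry to conclude $\mathcal{C}=\mathcal{C}^+\cup\mathcal{C}^-$. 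Your closedness step at $U_0=0$ (showing that a limit of one-signed solutions at a trivial point forces $t_0=t_1^\alpha$) is not part of the paper's proof of this lemma; the paper establishes that fact separately in the next result, Lemma~\ref{CUB}. So your argument is a somewhat more self-contained topological reorganization that absorbs part of Lemma~\ref{CUB}, while the paper's version is shorter here at the cost of deferring to \cite{RM} and postponing the trivial-limit analysis. One small point worth making explicit in your write-up: the application of the cooperative maximum principle in the closedness step uses $t_0>0$, which you get from Lemma~\ref{lem:nosolution} since $t_n>t_1^\alpha$ along $\mathcal{C}^+$.
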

\begin{proof}
First we will prove that   $\mathcal{C}$ does not possess semi-trivial positive solutions $(t,u,0)$ or $(t,0,v)$. Indeed, suppose $(t,u,0)\in\mathcal{C}$, with $u\geq 0$ and $u \neq0$. From the arguments of Lemma \ref{lem:nosolution}, we are already assuming that \( t > t_1^\alpha \), and so by the system \eqref{P1a}, it follows that
$$-\Delta u+\vec{\alpha}(x)\cdot\nabla u+\Phi_{(u,0)}(x)u = t au\geq0\quad\mbox{ in }\Omega,$$
Then, applying again the maximum principle, see \cite{Gilbarg-Trudinger} or \cite{LopezGomes}, we derive $u(x)>0$ for all $x \in \Omega$. Using the second equation of the system \eqref{P1a}, we obtain $tcu=0$, which is a contradiction. Similarly, $(t,0,v)\notin\mathcal{C}$.

On the other hand, 	since $\mathcal{C}$	 and considering the previous Lemma, we can argue as in \cite[Remark 4.1]{RM} to ensure that there does not exist \( (t, U) = (t, u, v) \in \mathcal{C} \) such that \( u < 0 \) and \( v > 0 \) or \( u > 0 \) and \( v < 0 \). Moreover, by the strong maximum principle, if $(t,U)$ is a nonnegative solution of \eqref{P1a}, then $U(x)>0$ for all $x \in \Omega$.

Finally, we observe that $(t,U)\in \mathcal{C}$ if, and only if $(t,-U)\in \mathcal{C}$. Consequently, we obtain the decomposition $\mathcal{C}=\mathcal{C}^+\cup \mathcal{C}^-$.
\end{proof}




We are now in a position to establish the following result:

\begin{lemma} \label{CUB}
	$\mathcal{C}^{+}$ is unbounded.
\end{lemma}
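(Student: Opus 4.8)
The plan is to eliminate the second horn of the Rabinowitz dichotomy already established for $\mathcal{C}$, which will force $\mathcal{C}$ to be unbounded, and then transfer this to $\mathcal{C}^+$ by the reflection symmetry. Recall that $\mathcal{C}$ satisfies one of: (i) $\mathcal{C}$ is unbounded in $\R\times E_1$, or (ii) $(\hat t,0)\in\mathcal{C}$ for some $\hat t\neq t_1^\alpha$. The structural fact I would lean on is that, by the preceding lemma, every nontrivial $(t,U)\in\mathcal{C}$ is sign-definite, that is, $U>0$ in $\Omega$ (so $(t,U)\in\mathcal{C}^+$) or $U<0$ in $\Omega$ (so $(t,U)\in\mathcal{C}^-$); there are no semitrivial or mixed-sign solutions on $\mathcal{C}$.

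First I would rule out (ii). Suppose $(\hat t,0)\in\mathcal{C}$. Then there exist $(t_n,U_n)\in\mathcal{C}$ with $U_n\neq0$, $t_n\to\hat t$ and $\|U_n\|_1\to0$. Each $U_n$ is sign-definite, so after extracting a subsequence and using the symmetry $(t,U)\in\mathcal{C}\Leftrightarrow(t,-U)\in\mathcal{C}$ I may assume $U_n>0$ for every $n$. Setting $W_n=U_n/\|U_n\|_1$ and using $U_n=t_nS_1(U_n)+G_1(U_n)$ together with $G_1(U)=o(\|U\|_1)$, I obtain $W_n=t_nS_1(W_n)+o_n(1)$, exactly as in Lemma \ref{sinal}. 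The compactness of $S_1$ then gives, along a further subsequence, $W_n\to W$ in $E_1$ with $\|W\|_1=1$, $W\geq0$, and $LW=\hat t\,AW$ in $\Omega$, $W=0$ on $\partial\Omega$.

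Now the principal-eigenvalue characterization finishes the exclusion: a nontrivial, nonnegative eigenfunction of $LW=tAW$ can only occur for $t=t_1^\alpha$, since $\lambda_1^\alpha$ is the unique eigenvalue of $-\Delta+\vec{\alpha}\cdot\nabla$ admitting a positive eigenfunction and $\lambda_A$ the unique eigenvalue of $A$ admitting a positive eigenvector (this is the same input used in Lemma \ref{sinal}, via \cite[Lemmas 3.1 and 3.2]{RM}). Hence $\hat t=t_1^\alpha$, contradicting $\hat t\neq t_1^\alpha$. Therefore (ii) is impossible and $\mathcal{C}$ is unbounded. Since $(t,U)\mapsto(t,-U)$ is a norm-preserving bijection carrying $\mathcal{C}^+$ onto $\mathcal{C}^-$ and $\mathcal{C}=\mathcal{C}^+\cup\mathcal{C}^-$, the unboundedness of $\mathcal{C}$ passes to at least one of the two pieces, and hence, by the symmetry, to $\mathcal{C}^+$.

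I expect the delicate point to be the passage to the limit in the second step, namely establishing that the normalized sequence $W_n$ converges to a genuine, nontrivial, nonnegative eigenfunction rather than degenerating, and, relatedly, making sure the sign-definiteness furnished by the decomposition lemma (a priori asserted only for nontrivial solutions) is precisely what licenses the assumption $U_n>0$. The concluding symmetry transfer from $\mathcal{C}$ to $\mathcal{C}^+$ is routine. Note finally that the a priori bound of Lemma \ref{L1} shows the unboundedness of $\mathcal{C}^+$ must manifest through the $t$-variable, which is what one ultimately exploits to recover solutions of \eqref{P1a} for every $t>t_1^\alpha$.
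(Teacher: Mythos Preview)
Your argument is correct and essentially coincides with the paper's: both exclude the second Rabinowitz alternative by normalizing a sequence of sign-definite solutions approaching $(\hat t,0)$, passing to the limit via the compactness of $S_1$, and invoking the principal-eigenvalue characterization to force $\hat t=t_1^\alpha$. The only cosmetic difference is the order of the logic: the paper assumes $\mathcal{C}^+$ bounded, deduces $\mathcal{C}$ bounded by the reflection symmetry, and then runs the normalized-sequence argument, whereas you first exclude horn (ii) directly and then transfer unboundedness from $\mathcal{C}$ to $\mathcal{C}^+$ via the same symmetry at the end.
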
 

\begin{proof}
	Suppose, by contradiction, that $\mathcal{C}^{+}$ is bounded. Then, $\mathcal{C}$ is also bounded. From the global bifurcation theorem, there exists $(\hat{t},0)\in\mathcal{C}$, where $\hat{t}\neq t_1^{\alpha}$ and $\hat{t}^{-1}\in\sigma(S_1)$.
Hence, without loss of generality, there exists $(t_n,U_n)\subset\mathcal{C}^+$ with $t_n\rightarrow\hat{t}$ such that
$$U_n\neq0,\quad\|U_n\|_1\rightarrow0\mbox{ and }U_n=t_{n}S_1(U_n)+G_1(U_n).$$	
Setting $W_n=U_n/\|U_n\|_1$, from the compactness of the operator $S_1$,  we can assume that there exists $W\in E_1$  such that, up to subsequence, $W_n\rightarrow W$ in $E_1$, where $W\neq0$, $W\geq0$, and satisfies
$$
\left\{\begin{array}{lcl}
LW&=&(\hat{t}A)W\mbox{ in }\quad\Omega,\\
W&=&0\mbox{ in }\quad\partial\Omega.
\end{array}\right.
$$
Therefore $\hat{t}=t_1^{\alpha}$, which is a contradiction. This proves the lemma.
\end{proof}

\begin{proof}[Proof of Theorem \ref{LP}]
	In view of Lemma \ref{lem:nosolution}, there is no positive solution to problem \eqref{P1a}, when $t\leq t_1^\alpha$. Moreover, Lemma \ref{L1} provides a priori bounds for the positive solutions of $\eqref{P1a}$. Consequently, the unbounded continuum $\mathcal{C}^+$ intersects ${t} \times E_1$ for all \( t > t_1^\alpha \). This completes the proof.
\end{proof}

\begin{proof}[Proof of Theorem \ref{TP1}]
In view of Theorem \ref{LP}, it is clear that the problem \eqref{P} has a positive solution if, and only if, $1> t_1^{\alpha}=\lambda_1^{\alpha}/\lambda_A$. Therefore, \eqref{P} has a positive solution if, and only if, $\lambda_A>\lambda_1^{\alpha}$. 
\end{proof}
\section{Existence and nonexistence of solution to $p,q>1$}

Similarly to what was done in the previous section, we will consider the following auxiliary system:  
\begin{equation}\tag{$P_{p,q}^t$}\label{Ppqt}
    	\left\{
	\begin{array}{lcl}
		-\Delta u+p|u|^{p-1}\vec{\alpha}(x)\cdot \nabla u+\Phi_{(u,v)}u&=&t(au+bv)\quad\mbox{in}\quad\Omega,\\
		-\Delta v+q|v|^{q-1}\vec{\beta}(x)\cdot \nabla v+\Psi_{(u,v)}v&=&t(cu+dv)\quad\mbox{in}\quad\Omega,\\
\qquad \qquad 	\qquad \qquad \qquad \qquad	u=v&=&0\quad\mbox{on}\quad\partial\Omega,
	\end{array}
	\right.
\end{equation}
where $t \in \mathbb{R}$ will be regard as bifurcation parameter. Our main result on the existence and nonexistence of positive solutions for this system can be stated as follows:

\begin{theorem} \label{LP2}
	Assume that $K_1,K_2\in \mathcal{K}$ and $(f_0)-(f_2)$ hold with $p,q>1$ and $\gamma>\max\{p,q\}$. If $\operatorname{div}\vec{\alpha}(x)=\operatorname{div}\vec{\beta}(x)=0$ in $\Omega$, then problem \eqref{Ppqt} has a positive solution if, and only if, $t>t_1:=\lambda_1/\lambda_A$.
    \end{theorem}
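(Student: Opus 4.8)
The plan is to follow the same scheme that proves Theorem \ref{LP}, splitting the equivalence into a nonexistence (``only if'') part and an existence (``if'') part, but now working with the operators $S_{pq}, G_{pq}$ of Section 2 and the map $\mathfrak{F}_{pq}(t,U)=U-tS_{pq}(U)-G_{pq}(U)$ instead of $S_1,G_1,\mathfrak F$. The decisive structural feature is that, since $p,q>1$, the nonlinear advection factors $p|u|^{p-1}$ and $q|v|^{q-1}$ are of higher order at $U=0$; hence the linearization of \eqref{Ppqt} along the trivial branch is governed by the pure Laplacian, $-\Delta U = tAU$, rather than by $L$. This is precisely why the threshold is $t_1=\lambda_1/\lambda_A$ (with $\lambda_1$ the principal eigenvalue of $-\Delta$) and not $t_1^\alpha$, and it also explains why the two flows $\vec\alpha,\vec\beta$ are allowed to differ without affecting the bifurcation point.

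For the nonexistence part (the analogue of Lemma \ref{lem:nosolution}) I would first symmetrize $A$: the scaling $w=\sigma v$ with $\sigma^2=b/c$ replaces $A$ by the symmetric matrix $A_0$ with off-diagonal entry $\hat b=\sqrt{bc}$ and the same largest eigenvalue $\lambda_A$. Testing the first equation against $u$ and the rescaled second equation against $w$, the crucial point is that each advection term is annihilated: by the identity in Remark \ref{remar2}, $\int_\Omega p\,u^{p}(\vec\alpha\cdot\nabla u)\,dx=\tfrac{p}{p+1}\int_\Omega \vec\alpha\cdot\nabla u^{p+1}\,dx=0$ because $\operatorname{div}\vec\alpha=0$ and $u\in C^1_0(\overline\Omega)$, and likewise for $w$ using $\operatorname{div}\vec\beta=0$. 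Adding the two resulting identities gives $\int_\Omega(|\nabla u|^2+|\nabla w|^2)\,dx+\int_\Omega\Phi_{(u,v)}u^2\,dx+\int_\Omega\Psi_{(u,v)}w^2\,dx=t\int_\Omega\langle A_0U_0,U_0\rangle\,dx$, and since the two nonlocal integrals are strictly positive (by $(f_1)$, $(K_2)$ and Remark \ref{remar1}) we get $\int_\Omega(|\nabla u|^2+|\nabla w|^2)\,dx<t\int_\Omega\langle A_0U_0,U_0\rangle\,dx$. Combining the Poincaré bound $\int_\Omega(|\nabla u|^2+|\nabla w|^2)\,dx\ge\lambda_1\int_\Omega|U_0|^2\,dx$ with $\langle A_0\xi,\xi\rangle\le\lambda_A|\xi|^2$ then forces $\lambda_1<t\lambda_A$, that is $t>t_1$.

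For the existence part I would reproduce, mutatis mutandis, the four steps of Section 3. First, a priori bounds for positive solutions of \eqref{Ppqt} with $t\le\Lambda$, via the blow-up argument of Lemma \ref{L1}: testing the normalized equations against the solution itself again annihilates the advection through the identity of Remark \ref{remar2}, so one recovers $\int_\Omega\Phi_{(\overline u_n,\overline v_n)}\overline u_n^2\,dx\to0$, concludes that the weak limit vanishes by Remark \ref{remar1}, and reaches a contradiction, followed by an elliptic regularity upgrade to a uniform $C^1_0$-bound. Second, global bifurcation (Rabinowitz) from $(t_1,0)$, valid because $G_{pq}(U)=o(\|U\|_1)$ as $U\to0$ (which holds since $p,q>1$) and $t_1$ is a characteristic value of $S_{pq}$ of algebraic multiplicity one with a positive eigenfunction. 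Third, the sign/continuity lemma (analogue of Lemma \ref{sinal}), where the limiting profile solves $-\Delta W=t_1AW$ and is therefore of one sign, yielding the decomposition $\mathcal C=\mathcal C^+\cup\mathcal C^-$. Fourth, unboundedness of $\mathcal C^+$ (analogue of Lemma \ref{CUB}). Finally, combining the a priori bounds with nonexistence for $t\le t_1$ shows that the unbounded continuum $\mathcal C^+$ must meet $\{t\}\times E_1$ for every $t>t_1$, giving a positive solution there, exactly as in the proof of Theorem \ref{LP}.

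I expect the main obstacle to be the a priori estimate of the first step, specifically the regularity upgrade from the $H^1_0$-bound to a uniform $C^1_0$-bound in the presence of the nonlinear advection $p|u|^{p-1}\vec\alpha\cdot\nabla u$. While the divergence-free identity neutralizes the advection in every energy computation where one tests against the solution, the bootstrap to $C^1$ must control the product $|u|^{p-1}\nabla u$ against the nonlocal reaction term, and it is here that the hypothesis $\gamma>\max\{p,q\}$ is used, guaranteeing that the nonlocal reaction dominates the advection so that the iteration closes. Correctly pinning down the linearization at the origin, and hence the multiplicity-one characteristic value $t_1$, is the other delicate point, though it is more conceptual than technical.
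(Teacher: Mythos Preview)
Your proposal is correct and follows the paper's approach essentially step for step: Section 4 proves Theorem \ref{LP2} via the nonexistence Lemma \ref{lem:nosolution2} (symmetrization $w=\sigma v$, testing against $U_0$, Remark \ref{remar2} to kill the advection, then Poincar\'e and $\langle A_0\xi,\xi\rangle\le\lambda_A|\xi|^2$, exactly as you sketch), the a priori bound Lemma \ref{L1.} by blow-up, Rabinowitz global bifurcation from $(t_1,0)$ using $G_{pq}(U)=o(\|U\|_1)$, the sign Lemma \ref{sinal1} whose limit problem is $-\Delta W=t_1AW$, and the decomposition and unboundedness of $\mathcal C^+$. One small remark on your closing paragraph: in the paper the hypothesis $\gamma>\max\{p,q\}$ is actually invoked already at the $H^1$ stage of Lemma \ref{L1.} (which is stated \emph{without} the divergence-free assumption), to send the normalized advection contribution $\|U_n\|_H^{p-1-\gamma}\int_\Omega p|\bar u_n|^{p-1}(\vec\alpha\cdot\nabla\bar u_n)\bar u_n\,dx$ to zero, rather than in the $C^1$ bootstrap you flag as the delicate point---though under $\operatorname{div}\vec\alpha=\operatorname{div}\vec\beta=0$ your route through Remark \ref{remar2} works at that step as well.
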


To establish this result, we need to prove non-existence results and a priori bounds, which will be done in the following lemmas.

\begin{lemma}\label{L1.}
	Let $U=(u,v)$ be a positive solution to $(P^{t}_{p,q})$ with $t\leq\Lambda$, then there exists a constant $M_{\Lambda}>0$ such that
	\begin{equation*}
		\|U\|_1\leq M_{\Lambda}.
	\end{equation*}

\end{lemma}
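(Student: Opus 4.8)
The plan is to argue by contradiction in exactly the same spirit as Lemma \ref{L1}: first obtain a uniform bound in $H:=H_0^1(\Omega)\times H_0^1(\Omega)$ by a blow-up argument, and then upgrade it to a bound in $E_1$ by elliptic regularity. Suppose there exist positive solutions $U_n=(u_n,v_n)$ of \eqref{Ppqt} with parameters $t_n\in[0,\Lambda]$ and $\|U_n\|_H\to\infty$. Normalizing $\overline{u}_n=u_n/\|U_n\|_H$ and $\overline{v}_n=v_n/\|U_n\|_H$, we may pass to a subsequence so that $\overline{u}_n\rightharpoonup u$, $\overline{v}_n\rightharpoonup v$ weakly in $H_0^1(\Omega)$, strongly in $L^2(\Omega)$ and a.e.\ in $\Omega$.

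The only genuinely new feature compared with Lemma \ref{L1} is the nonlinear advection $p|u|^{p-1}\vec{\alpha}\cdot\nabla u=\vec{\alpha}\cdot\nabla(u^p)$ (and its $v$-counterpart). I would dispose of it by testing each equation against its own component. Since $u_n,v_n>0$ and $\operatorname{div}\vec{\alpha}=\operatorname{div}\vec{\beta}=0$, the identity recorded in Remark \ref{remar2} gives $\int_{\Omega}\overline{u}_n^{\,p}(\vec{\alpha}\cdot\nabla\overline{u}_n)\,dx=\int_{\Omega}\overline{v}_n^{\,q}(\vec{\beta}\cdot\nabla\overline{v}_n)\,dx=0$, so the advection contributions vanish identically. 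Using the homogeneity $(\Phi_1)$, i.e.\ $\Phi_{(u_n,v_n)}=\|U_n\|_H^{\gamma}\Phi_{(\overline{u}_n,\overline{v}_n)}$, and testing the first equation with $u_n$ then dividing by $\|U_n\|_H^{2}$ yields the energy identity
\begin{equation*}
\int_{\Omega}|\nabla\overline{u}_n|^2\,dx+\|U_n\|_H^{\gamma}\int_{\Omega}\Phi_{(\overline{u}_n,\overline{v}_n)}\overline{u}_n^2\,dx=t_n\int_{\Omega}(a\overline{u}_n+b\overline{v}_n)\overline{u}_n\,dx,
\end{equation*}
together with its analogue for $\overline{v}_n$ involving $\Psi$. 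This is formally the same situation encountered in Lemma \ref{L1}.

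From here the argument transcribes Lemma \ref{L1} verbatim. The right-hand side above is bounded (as $\overline{u}_n,\overline{v}_n$ are bounded in $L^2$ and $t_n\le\Lambda$) while $\|U_n\|_H^{\gamma}\to\infty$, forcing $\int_{\Omega}\Phi_{(\overline{u}_n,\overline{v}_n)}\overline{u}_n^2\,dx\to0$ and $\int_{\Omega}\Psi_{(\overline{u}_n,\overline{v}_n)}\overline{v}_n^2\,dx\to0$. Applying $(f_1)$, Fatou's lemma and Remark \ref{remar1} exactly as before yields $u=v=0$, hence $\overline{u}_n,\overline{v}_n\to0$ in $L^2(\Omega)$. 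Feeding this back into the energy identity and discarding the nonnegative $\Phi$-term gives $\int_{\Omega}|\nabla\overline{u}_n|^2\,dx\le t_n\int_{\Omega}(a\overline{u}_n+b\overline{v}_n)\overline{u}_n\,dx\to0$, and similarly for $\overline{v}_n$, so $\|(\overline{u}_n,\overline{v}_n)\|_H\to0$, contradicting $\|(\overline{u}_n,\overline{v}_n)\|_H=1$. Thus $(u_n,v_n)$ is bounded in $H$.

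The main obstacle is the passage from the $H$-bound to the $E_1$-bound, since the operator is genuinely quasilinear. Here I would again exploit $\operatorname{div}\vec{\alpha}=\operatorname{div}\vec{\beta}=0$ to write the advection in divergence form, $p|u|^{p-1}\vec{\alpha}\cdot\nabla u=\operatorname{div}(u^p\vec{\alpha})$; note that in a Moser iteration the power test functions $\overline{u}^{\,2\beta-1}$ annihilate this term by the very same divergence identity, so the scheme is driven only by the (cooperative, since $a,b,c,d>0$) linear source and the nonnegative logistic damping, producing a uniform $L^\infty(\Omega)$ bound. Once $u,v\in L^\infty$, $(\Phi_2)$ bounds $\Phi_{(u,v)},\Psi_{(u,v)}$ in $L^\infty$, the term $u^{p-1}\vec{\alpha}\cdot\nabla u$ lies in $L^2$, so $-\Delta u\in L^2$, and a standard $W^{2,s}$/Schauder bootstrap promotes the bound to $C^1_0(\overline{\Omega})$. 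I expect the delicate point to be checking that this $L^\infty$-then-$C^1$ scheme is uniform over $t_n\le\Lambda$; the structural hypotheses $p,q>1$, $\operatorname{div}\vec{\alpha}=\operatorname{div}\vec{\beta}=0$ and $\gamma>\max\{p,q\}$ are exactly what guarantee that the nonlocal damping controls the advection growth throughout this step, with everything else being a direct transcription of Lemma \ref{L1}.
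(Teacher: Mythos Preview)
Your proposal is correct and follows essentially the same contradiction/blow-up scheme as the paper: normalize, show the nonlocal integrals $\int\Phi_{(\overline u_n,\overline v_n)}\overline u_n^2$ and $\int\Psi_{(\overline u_n,\overline v_n)}\overline v_n^2$ vanish in the limit, apply $(f_1)$, Fatou and Remark~\ref{remar1} to get $u=v=0$, feed back to contradict $\|(\overline u_n,\overline v_n)\|_H=1$, then bootstrap to $E_1$. The only cosmetic difference is that you test with $\overline u_n$ and invoke Remark~\ref{remar2} to annihilate the advection integral outright, whereas the paper tests with $\overline u_n/\|U_n\|_H^\gamma$ and carries a factor $\|U_n\|_H^{-(\gamma-p)}$ in front of that same (vanishing) integral; your route is marginally cleaner and makes transparent that $\gamma>\max\{p,q\}$ is not actually needed for the $H$-bound. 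Your Moser-iteration sketch for the $H\to E_1$ upgrade is a legitimate expansion of what the paper records as ``standard arguments''.
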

\begin{proof}
Suppose, by contradiction, that $(u_n,v_n)\subset H$ with $(u_n,v_n)>0$ and $(t_n)\subset[0,\Lambda]$ such that $\|U_n\|_H\rightarrow\infty$, where $U_n=(u_n,v_n)$. So
\begin{equation*}
	\int_{\Omega}\nabla u_n\nabla\varphi dx+\int_{\Omega}p|u_n|^{p-1}(\vec{\alpha}(x)\cdot \nabla u_n)\varphi dx+\int_{\Omega}\Phi_{(u_n,v_n)}u_n\varphi dx=t_n\int_{\Omega}(au_n+bv_n)\varphi dx
\end{equation*}
and
\begin{equation*}
	\int_{\Omega}\nabla v_n\nabla\eta dx+\int_{\Omega}q|v_n|^{q-1}(\vec{\beta}(x)\cdot \nabla v_n)\eta dx+\int_{\Omega}\Psi_{(u_n,v_n)}v_n\eta dx=t_n\int_{\Omega}(cu_n+dv_n)\eta dx
\end{equation*}
for all $\varphi,\eta\in H_0^1(\Omega)$. Thus, setting $\overline{u}_n=u_n/\|U_n\|$ e $\overline{v}_n=v_n/\|U_n\|$, we get
\begin{equation*}
	\int_{\Omega}\nabla \overline{u}_n\nabla\varphi dx+\int_{\Omega}p|u_n|^{p-1}(\vec{\alpha}(x)\cdot \nabla \overline{u}_n)\varphi dx+\int_{\Omega}\Phi_{(u_n,v_n)}\overline{u}_n\varphi dx=t_n\int_{\Omega}(a\overline{u}_n+b\overline{v}_n)\varphi dx
\end{equation*}
and
\begin{equation*}
	\int_{\Omega}\nabla \overline{v}_n\nabla\eta dx+\int_{\Omega}q|v_n|^{q-1}(\vec{\beta}(x)\cdot \nabla \overline{v}_n)\eta dx+\int_{\Omega}\Psi_{(u_n,v_n)}\overline{v}_n\eta dx=t_n\int_{\Omega}(c\overline{u}_n+d\overline{v}_n)\eta dx.
\end{equation*}
Once $(\overline{u}_n)$ and $(\overline{v}_n)$ are bounded in $H_0^1(\Omega)$, we can suppose that there is $u$ and $v$ in $H_0^1(\Omega)$ such that,  up to subsequence,
\begin{eqnarray*}
	\overline{u}_n\rightharpoonup u, 	\overline{v}_n\rightharpoonup v \mbox{ in }H_{0}^{1}(\Omega),\overline{u}_n\rightarrow u, \overline{v}_n\rightarrow v\mbox{ in }L^2(\Omega)\mbox{ and }\overline{u}_n(x)\rightarrow u(x), \overline{v}_n(x)\rightarrow v(x)\mbox{ a.e. in }\Omega.
\end{eqnarray*}
Taking $\varphi=\overline{u}_n/\|U_n\|_H^{\gamma}$ and $\eta=\overline{v}_n/\|U_n\|_H^{\gamma}$ as test functions and recalling that $t^{\gamma}\Phi_{(u_n,v_n)}=\Phi_{(tu_n,tv_n)}$ and  $t^{\gamma}\Psi_{(u_n,v_n)}=\Psi_{(tu_n,tv_n)}$, for all $t>0$, we get
\begin{eqnarray*}
	\frac{1}{\|U_n\|_{H}^{\gamma}}\|\overline{u}_n\|^{2}_{H_{0}^{1}(\Omega)}+\frac{1}{\|U_n\|_{H}^{\gamma-p}}\int_{\Omega}p|\overline{u}_n|^{p-1}(\vec{\alpha}(x)\cdot \nabla \overline{u}_n) \overline{u}_n dx+\int_{\Omega}\Phi_{(\overline{u}_n,\overline{v}_n)}\overline{u}_n^2dx=t_n\int_{\Omega}(a\overline{u}_n+b\overline{v}_n)\frac{\overline{u}_n}{\|U_n\|_{H}^{\gamma}}dx,\\
	\frac{1}{\|U_n\|_{H}^{\gamma}}\|\overline{v}_n\|^{2}_{H_{0}^{1}(\Omega)}+\frac{1}{\|U_n\|_{H}^{\gamma-q}}\int_{\Omega}q|\overline{v}_n|^{q-1}(\vec{\beta}(x)\cdot \nabla \overline{v}_n) \overline{v}_n dx+\int_{\Omega}\Psi_{(\overline{u}_n,\overline{v}_n)}\overline{v}_n^2dx=t_n\int_{\Omega}(c\overline{u}_n+d\overline{v}_n)\frac{\overline{v}_n}{\|U_n\|_{H}^{\gamma}}dx.
\end{eqnarray*}
Therefore, using the Hölder's inequality and the equalities above, we get
\begin{equation*}
\lim_{n\rightarrow\infty}\int_{\Omega}\Phi_{(\overline{u}_n,\overline{v}_n)}\overline{u}_n^2dx=\lim_{n\rightarrow\infty}\int_{\Omega}\Psi_{(\overline{u}_n,\overline{v}_n)}\overline{v}_n^2dx=0.
\end{equation*}
Consequently, repeating the argument used in Lemma \ref{L1}, it yields $\|(\overline{u}_n,\overline{v}_n)\|_H\rightarrow0$. This contradicts the fact that $\|(\overline{u}_n,\overline{v}_n)\|_H=1$ for all $n\in\N$. So, since $(\overline{u}_n,\overline{v}_n)$ is bounded in $H$, by standard arguments we conclude that $(\overline{u}_n,\overline{v}_n)$ is bounded in $E_1$, completing the proof.
\end{proof}

\begin{lemma}\label{lem:nosolution2}
The problem $(P^{t}_{p,q})$ does not admit a positive solution if $t\leq t_1=\lambda_1  /\lambda_A$.
\end{lemma}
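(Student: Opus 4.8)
The plan is to adapt the symmetrization and testing strategy of Lemma \ref{lem:nosolution}, but to replace the self-adjoint eigenfunction argument by a direct energy estimate, since the nonlinear advection terms prevent us from pairing the solution against a weighted principal eigenfunction of a linear operator. First I would symmetrize exactly as before: given a positive solution $(u,v)$ of $(P^t_{p,q})$, set $w=\sigma v$ with $\sigma^2=b/c$, so that the pair $U_0=(u,w)$ solves a system governed by the symmetric matrix $A_0=\left(\begin{smallmatrix} a & \hat{b}\\ \hat{b} & d\end{smallmatrix}\right)$, $\hat{b}=\sqrt{bc}$, whose largest eigenvalue is again $\lambda_A$. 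The second equation now carries an advection term $q\sigma^{1-q}w^{q-1}\vec{\beta}\cdot\nabla w$, but the constant prefactor will be irrelevant for what follows.

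The heart of the argument is that the nonlinear advection disappears under the natural test functions. Testing the first equation with $u$ and the second with $w$ (legitimate since $U_0\in E_1$ by elliptic regularity, and $u,w>0$ so $|u|^{p-1}=u^{p-1}$, $|w|^{q-1}=w^{q-1}$), the advection contributions are constant multiples of $\int_\Omega\vec{\alpha}\cdot\nabla(u^{p+1})\,dx$ and $\int_\Omega\vec{\beta}\cdot\nabla(w^{q+1})\,dx$, both of which vanish by the identity of Remark \ref{remar2} together with $\operatorname{div}\vec{\alpha}=\operatorname{div}\vec{\beta}=0$ and the homogeneous boundary condition. Adding the two resulting identities gives
\[
\int_\Omega\big(|\nabla u|^2+|\nabla w|^2\big)\,dx+\int_\Omega\Phi_{(u,v)}u^2\,dx+\int_\Omega\Psi_{(u,v)}w^2\,dx=t\int_\Omega\langle A_0U_0,U_0\rangle\,dx.
\]

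Next I would exploit the signs. The nonlocal terms are nonnegative and, by $(f_1)$ and Remark \ref{remar1}, strictly positive because $u,w>0$ (if one vanished, Remark \ref{remar1} would force $u\equiv0$ or $w\equiv0$). Dropping them yields the strict inequality $\int_\Omega(|\nabla u|^2+|\nabla w|^2)\,dx< t\int_\Omega\langle A_0U_0,U_0\rangle\,dx$. On the left I apply the variational characterization of $\lambda_1$ to bound $\int_\Omega|\nabla u|^2\,dx\geq\lambda_1\int_\Omega u^2\,dx$ and likewise for $w$; on the right I use that $A_0$ is symmetric with largest eigenvalue $\lambda_A$, so $\langle A_0U_0,U_0\rangle\leq\lambda_A(u^2+w^2)$ pointwise. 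Combining these yields $\lambda_1\int_\Omega(u^2+w^2)\,dx<t\lambda_A\int_\Omega(u^2+w^2)\,dx$, and since $\int_\Omega(u^2+w^2)\,dx>0$ we conclude $t>\lambda_1/\lambda_A=t_1$, which is the contrapositive of the claim.

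The only genuinely delicate point, and the conceptual reason the threshold is $\lambda_1$ rather than $\lambda_1^\alpha$, is that, unlike the linear case $p=q=1$, here there is no self-adjoint operator whose principal eigenfunction can be paired against the solution. The nonlinear advection can only be neutralized by testing with the solution itself, which erases all information about $\vec{\alpha},\vec{\beta}$ beyond their being divergence-free; this is precisely why one loses the refinement from $\lambda_1$ to $\lambda_1^\alpha$. I expect no serious obstacle beyond making the vanishing of the advection rigorous (immediate from Remark \ref{remar2}) and justifying the strict positivity of the nonlocal terms through Remark \ref{remar1}.
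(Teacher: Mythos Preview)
Your proposal is correct and follows essentially the same route as the paper's proof: both symmetrize via $w=\sigma v$ to obtain the symmetric matrix $A_0$, test each equation against its own unknown so that the divergence-free condition annihilates the nonlinear advection (Remark \ref{remar2}), drop the strictly positive nonlocal terms, and then combine the Poincar\'e inequality on the left with the Rayleigh bound $\langle A_0z,z\rangle\le\lambda_A|z|^2$ on the right to force $t>\lambda_1/\lambda_A$. Your justification of the strict inequality via $(f_1)$ and Remark \ref{remar1} is slightly more explicit than the paper's, but the argument is the same.
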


\begin{proof}
	Suppose that $(u,v)$ is a positive solution to \eqref{Ppqt} for some $t \in \mathbb{R}$ and define \( U_0 := (u, w) \), where \( w := \sigma v \) and \( \sigma := b/c \). By reasoning as in the proof of Lemma \ref{lem:nosolution}, we obtain that \( U_0 \) is a solution of the following system  
	$$
	\left\{
	\begin{array}{lcl}
		-\Delta u+pu^{p-1}\vec{\alpha}(x)\cdot \nabla u+\Phi_{(u,v)}u&=&t[au+\hat{b}w]\quad\mbox{in}\quad\Omega,\\
		-\Delta w+\sigma^{1-q}qw^{q-1}\vec{\beta}(x)\cdot \nabla w+\Psi_{(u,v)}w&=&t[\hat{b}u+dw]\quad\mbox{in}\quad\Omega,\\
\qquad \qquad \qquad \qquad \qquad \qquad		u=w&=&0\quad\mbox{on}\quad\partial\Omega,
	\end{array}
	\right.
	$$
where $\hat{b}=b/\sigma= c \sigma$. On the other hand, since
$$
A_0=\left(\begin{array}{cc}a & \hat{b}\\ \hat{b} & d\end{array}\right)
$$
is a symmetric matrix, we know that  
	\begin{equation*}
		\left<A_0z,z\right>\leq \lambda_A|z|^2,\quad \forall z\in\R^2,
	\end{equation*}
Using this inequality with $z=U_0(x)$ and integrating over $\Omega$ it becomes apparent 
\begin{eqnarray*}
 t \lambda_A \int_{\Omega} (|u|^2+|w|^2) dx&\geq& \int_{\Omega}\left<tA_0U_0, U_0\right>dx \\
 &=& \int_{\Omega}|\nabla u|^2+pu^{p-1}\vec{\alpha}(x)\cdot \nabla u+\Phi_{(u,v)}(x)u^2dx \\
 && + \int_{\Omega}|\nabla w|^2+q\sigma^{1-q}w^{q-1} \vec{\beta}(x)\cdot \nabla w+\Psi_{(u,v)}(x)w^2dx\\ 
 &>& \int_{\Omega}(|\nabla u|^2+|\nabla v|^2)dx\\
 &\geq& \lambda_1\int_{\Omega}(|u|^2+|v|^2)dx,
\end{eqnarray*}
where the Poincaré inequality and Remark \ref{remar2} were used. Therefore, we obtain that $t > t_1=\lambda_1/\lambda_A$.
\end{proof}

To apply the global bifurcation results, we define the operator \(\mathfrak{F}_{pq}: \mathbb{R} \times E_1 \to E_1\) given by  
$$  
\mathfrak{F}_{pq}(t, U) =  U-tS_{pq}(U) -G_{pq}(U),  
$$  
and consider the set  
$$  
\Sigma_{pq}=\overline{\{(t,U)\in\R\times E_1:\mathfrak{F}_{pq}(t, U)=0,U\neq0\}},
$$  
Using the same arguments as in the previous section, we obtain the following result.

\begin{proposition}
There exists a component $\mathcal{C}=\mathcal{C}_{t_1}\subset\Sigma_{pq}$ of solutions to problem \eqref{Ppqt} emanating from the trivial solution at $(t_1,0)$ which satisfies one of the following non-excluding options: either (i) $\mathcal{C}$ is unbounded $\mathbb{R} \times E_1$ or (ii) there exists $\hat{t} \neq t_1$ such that $(\hat{t},0)\in \mathcal{C}$.
\end{proposition}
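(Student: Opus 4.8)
The plan is to invoke Rabinowitz's classical global bifurcation theorem \cite[Theorem 1.3]{Rabinowitz} applied to the abstract equation $\mathfrak{F}_{pq}(t,U)=U-tS_{pq}(U)-G_{pq}(U)=0$, exactly mirroring the argument carried out in the preceding section for $p=q=1$. The structural fact that makes this work is that, because $p,q>1$, the nonlinear advection terms $p|u|^{p-1}\vec{\alpha}(x)\cdot\nabla u$ and $q|v|^{q-1}\vec{\beta}(x)\cdot\nabla v$ vanish to higher order as $U\to0$; hence they do not enter the linearization at the trivial branch and are instead absorbed, together with the nonlocal terms $\Phi_{(u,v)}u$ and $\Psi_{(u,v)}v$, into the compact higher-order remainder $G_{pq}$.

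First I would recall from the construction in Section 2 the three ingredients needed to apply the theorem: $S_{pq}:E_1\to E_1$ is linear and compact; $G_{pq}:E_1\to E_1$ is compact; and $G_{pq}(U)=o(\|U\|_1)$ as $U\to0$, the latter following from $(f_2)$ and $(\Phi_2)$ for the nonlocal part and from $p,q>1$ for the advection part. These are precisely the hypotheses of \cite[Theorem 1.3]{Rabinowitz}, provided the bifurcation point is identified.

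Next I would compute the characteristic values of $S_{pq}$. A real number $t$ is a characteristic value of $S_{pq}$ if and only if there exists $U\neq0$ with $U=tS_{pq}(U)$, which, by the definition of $S_{pq}$, is equivalent to $-\Delta U=tAU$ in $\Omega$ with $U=0$ on $\partial\Omega$. Writing $U=\varphi_1 z$, where $\varphi_1>0$ denotes the principal eigenfunction of $-\Delta$ associated with $\lambda_1$ and $z=(z_1,z_2)>0$ the eigenvector of $A$ associated with $\lambda_A$, a direct computation gives $-\Delta(\varphi_1 z)=\lambda_1\varphi_1 z=(\lambda_1/\lambda_A)\,A(\varphi_1 z)$, so that $t_1=\lambda_1/\lambda_A$ is a characteristic value. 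Since $\lambda_1$ is a simple eigenvalue of $-\Delta$ and $\lambda_A$ is the simple dominant eigenvalue of $A$ by Perron--Frobenius (as $A$ has positive entries), the characteristic value $t_1$ has algebraic multiplicity one, in particular odd.

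With the bifurcation point $(t_1,0)$ and the odd multiplicity established, \cite[Theorem 1.3]{Rabinowitz} yields a connected component $\mathcal{C}=\mathcal{C}_{t_1}\subset\Sigma_{pq}$ emanating from $(t_1,0)$ satisfying exactly the asserted dichotomy: either it is unbounded in $\mathbb{R}\times E_1$, or it returns to the trivial branch at a second point $(\hat t,0)$ with $\hat t\neq t_1$. The only step requiring genuine care — in contrast to the $p=q=1$ case, where the advection sits inside the linear operator $L$ and produces the bifurcation value $t_1^\alpha=\lambda_1^\alpha/\lambda_A$ — is the verification that for $p,q>1$ the advection nonlinearity is a higher-order term; this is what collapses the linearization to the pure system $-\Delta U=tAU$ and pins the bifurcation value at $t_1=\lambda_1/\lambda_A$.
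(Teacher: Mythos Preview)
Your proposal is correct and follows essentially the same approach as the paper, which simply states that the result is obtained ``using the same arguments as in the previous section'' --- i.e., by applying \cite[Theorem 1.3]{Rabinowitz} after noting that $S_{pq}$ is linear compact, $G_{pq}$ is compact with $G_{pq}(U)=o(\|U\|_1)$, and $t_1=\lambda_1/\lambda_A$ is a characteristic value of $S_{pq}$ of algebraic multiplicity one. Your write-up is in fact more explicit than the paper's, particularly in isolating why the nonlinear advection terms, being of order $p,q>1$, drop out of the linearization and force the bifurcation value to be $t_1$ rather than $t_1^{\alpha}$.
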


In order to establish that \(\mathcal{C}\) contains an unbounded subcontinuum of positive solutions of \eqref{Ppqt}, we require the following auxiliary result.

\begin{lemma}\label{sinal1}
	There exists $\delta>0$ such that, if $(t,U)\in\mathcal{C}$ with $|t-t_1|+\|U\|_1<\delta$ and $U\neq0$, then $U$ has a defined sign, that is,
	$$
	U(x)>0, \quad \forall x\in\Omega\quad \mbox{or} \quad U(x)<0,\quad \forall x\in\Omega.
	$$
\end{lemma}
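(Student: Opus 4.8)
The plan is to mirror the proof of Lemma \ref{sinal}, simply replacing the operators $S_1,G_1$ and the value $t_1^{\alpha}$ by their analogues $S_{pq},G_{pq}$ and $t_1=\lambda_1/\lambda_A$ from the present section. As there, it suffices to show that for any sequences $(U_n)\subset E_1$ and $t_n\to t_1$ with
$$
U_n\neq0,\quad \|U_n\|_1\to 0\quad\mbox{and}\quad U_n=t_nS_{pq}(U_n)+G_{pq}(U_n),
$$
the function $U_n$ does not change sign for $n$ large enough; the existence of a suitable $\delta>0$ then follows by a standard contradiction argument.

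First I would normalize by setting $W_n=U_n/\|U_n\|_1$, so that $\|W_n\|_1=1$. Dividing the identity $U_n=t_nS_{pq}(U_n)+G_{pq}(U_n)$ by $\|U_n\|_1$ and using the linearity of $S_{pq}$ yields
$$
W_n=t_nS_{pq}(W_n)+\frac{G_{pq}(U_n)}{\|U_n\|_1}.
$$
Since $G_{pq}(U)=o(\|U\|_1)$ as $U\to0$ (established in Section 2) and $\|U_n\|_1\to0$, the last term is $o_n(1)$ in $E_1$. The compactness of $S_{pq}$ then allows me to assume, up to a subsequence, that $S_{pq}(W_n)$ converges, whence $W_n\to W$ in $E_1$ with $\|W\|_1=1$, so that $W\neq0$. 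Passing to the limit in the normalized identity gives $W=t_1S_{pq}(W)$, that is,
$$
\left\{\begin{array}{lcl}
-\Delta W&=&t_1AW\quad\mbox{in }\Omega,\\
W&=&0\quad\mbox{on }\partial\Omega.
\end{array}\right.
$$
Here the crucial structural point is that the genuinely nonlinear advection contributions in $G_{pq}$ disappear in the limit, so the limiting operator is $-\Delta$ rather than $L$; this is exactly why the relevant bifurcation value is $t_1=\lambda_1/\lambda_A$. Since $W\neq0$, arguing as in \cite[Lemmas 3.1 and 3.2]{RM} I conclude that $W$ has a definite sign, say $W>0$ in $\Omega$. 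Because $W_n\to W$ in $E_1$, it follows that $W_n>0$ in $\Omega$ for $n$ large, and since $U_n$ and $W_n$ have the same sign, $U_n>0$ for such $n$, which proves the claim.

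The main obstacle is verifying that $G_{pq}(U_n)/\|U_n\|_1\to0$, since $G_{pq}$ now carries the nonlinear advection terms $p|u|^{p-1}\vec{\alpha}\cdot\nabla u$ and $q|v|^{q-1}\vec{\beta}\cdot\nabla v$. The estimate recalled in Section 2,
$$
\|G_{pq}(U)\|_1\le C\left(\|\Phi_{(u,v)}\|_\infty+c\|u\|_\infty^{p-1}+\|\Psi_{(u,v)}\|_\infty+c\|v\|_\infty^{q-1}\right)\|U\|_1,
$$
shows that the prefactor tends to $0$ as $\|U_n\|_1\to0$: the terms $\|u_n\|_\infty^{p-1}$ and $\|v_n\|_\infty^{q-1}$ vanish precisely because $p,q>1$, while $\|\Phi_{(u_n,v_n)}\|_\infty$ and $\|\Psi_{(u_n,v_n)}\|_\infty$ vanish by the homogeneity $(f_2)$ together with $(\Phi_2)$. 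Once this is in place, the remainder of the argument is a routine adaptation of Lemma \ref{sinal}.
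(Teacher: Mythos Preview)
Your proof is correct and follows essentially the same approach as the paper: normalize to $W_n=U_n/\|U_n\|_1$, use $G_{pq}(U)=o(\|U\|_1)$ and the compactness of $S_{pq}$ to pass to a limit $W$ solving $-\Delta W=t_1AW$, and then invoke \cite[Lemmas 3.1 and 3.2]{RM} exactly as in Lemma~\ref{sinal}. Your added paragraph isolating why the advection terms vanish (namely $p,q>1$) is more explicit than the paper, which simply refers back to Section~2 and to the proof of Lemma~\ref{sinal}.
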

\begin{proof}
It is enough to prove that for any sequences $(U_n) \subset E_1$ and $t_{n}\to t_1$ with 
$$
U_n\neq0,\quad \|U_n\|_1\to 0 \quad \mbox{and } \quad U_{n}=t_{n}S_{pq}(U_n)+G_{pq}(U_n),
$$
$U_n$ does not change sign for sufficiently large $n$.
Setting $W_n=U_n/\|U_n\|_1$, we have that
$$W_n=t_nS_{pq}(W_n)+\frac{G_{pq}(U_n)}{\|U_n\|_1}=t_nS_{pq}(W_n)+o_n(1).$$
From the compactness of the operator $S_{pq}$, we can assume that $(S_{pq}(W_n))$ is convergent. Then, $W_n\to W$ in $E_1$ for some $W \in E_1$, with $\|W\|_1=1$. Consequently, 
$$
\left\{\begin{array}{lcl}
-\Delta W&=&t_1AW\quad \mbox{in}\quad\Omega,\\
W&=&0\quad\mbox{on}\quad\partial\Omega.
\end{array}\right.
$$	
The rest of the proof follows the same lines as the proof of Lemma \ref{sinal}.
\end{proof}

Using the arguments from the previous section, we can once again obtain the following decomposition of \(\mathcal{C}\):

\begin{lemma} It holds that
$$\mathcal{C}= \mathcal{C}^+ \cap \mathcal{C}^-,$$
where 
$
    \mathcal{C}^+:=\{(t,U)\in \mathcal{C};\, U(x)>0 ~~\forall x \in \Omega \} \cup \{(t_1^\alpha,0)\}$ 
    and $
    \mathcal{C}^-:=\{(t,U)\in \mathcal{C};\, (t,-U)\in \mathcal{C}^+\}$.     Moreover, $\mathcal{C}^+$ is unbounded.
\end{lemma}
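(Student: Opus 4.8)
The plan is to reproduce, in the present $p,q>1$ setting, the three-step scheme used in Section 3 for the case $p=q=1$: first the sign analysis near the bifurcation point (already supplied by Lemma \ref{sinal1}), then the decomposition $\mathcal{C}=\mathcal{C}^+\cup\mathcal{C}^-$, and finally the unboundedness of $\mathcal{C}^+$, mirroring Lemma \ref{CUB}. The only genuinely new point is that the advection terms now enter \emph{nonlinearly}, through $p|u|^{p-1}\vec\alpha\cdot\nabla u$ and $q|v|^{q-1}\vec\beta\cdot\nabla v$, and are absorbed into the nonlinear operator $G_{pq}$; I would rely throughout on the estimate $G_{pq}(U)=o(\|U\|_1)$ established in Section 2.

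For the decomposition, I would first rule out semi-trivial solutions. If $(t,u,0)\in\mathcal{C}$ with $u\geq0$, $u\neq0$, then by Lemma \ref{lem:nosolution2} one has $t>t_1$, so the first equation of \eqref{Ppqt} gives $-\Delta u+p|u|^{p-1}\vec\alpha\cdot\nabla u+\Phi_{(u,0)}u=tau\geq0$; the strong maximum principle then forces $u>0$ in $\Omega$, and the second equation yields $tcu=0$, a contradiction. The case $(t,0,v)$ is symmetric. Next, combining Lemma \ref{sinal1} with the connectedness of $\mathcal{C}$, I would argue as in \cite[Remark 4.1]{RM} that $\mathcal{C}$ contains no solution with components of opposite sign, so that, after upgrading nonnegative components to strictly positive ones via the strong maximum principle, every nontrivial element of $\mathcal{C}$ has a definite sign. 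Since $S_{pq}$ is linear and $G_{pq}$ is odd (both data $p|u|^{p-1}\vec\alpha\cdot\nabla u$ and $\Phi_{(u,v)}u$ change sign under $U\mapsto-U$, using the evenness of $\Phi,\Psi$ in their arguments), the map $\mathfrak{F}_{pq}$ is odd in $U$; hence $(t,U)\in\mathcal{C}$ if and only if $(t,-U)\in\mathcal{C}$, which gives $\mathcal{C}=\mathcal{C}^+\cup\mathcal{C}^-$.

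For the unboundedness, I would argue by contradiction: if $\mathcal{C}^+$ were bounded, then by the reflection symmetry $\mathcal{C}^-$, and hence $\mathcal{C}$, would be bounded, so alternative (i) of the global bifurcation Proposition fails and alternative (ii) holds, giving a secondary point $(\hat t,0)\in\mathcal{C}$ with $\hat t\neq t_1$. I would then extract a sequence $(t_n,U_n)\subset\mathcal{C}^+$ with $t_n\to\hat t$, $U_n\neq0$, $\|U_n\|_1\to0$ and $U_n=t_nS_{pq}(U_n)+G_{pq}(U_n)$, normalize $W_n=U_n/\|U_n\|_1$, and pass to the limit: using the compactness of $S_{pq}$ together with $G_{pq}(U_n)/\|U_n\|_1\to0$, a subsequence converges to some $W\in E_1$ with $\|W\|_1=1$, $W\geq0$, solving $-\Delta W=\hat t\,AW$ in $\Omega$ and $W=0$ on $\partial\Omega$. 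Because $W$ is a nonnegative nontrivial eigenfunction, it must be a principal one, forcing $\hat t=\lambda_1/\lambda_A=t_1$, the desired contradiction.

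The main obstacle, and the place where the divergence-free hypothesis and the bound $\gamma>\max\{p,q\}$ quietly do their work, is the passage to the limit in these two normalization arguments. Unlike the $p=q=1$ case, the advection is nonlinear and sits inside $G_{pq}$, so I cannot simply invoke linearity; instead I must be sure that $G_{pq}(U_n)/\|U_n\|_1\to0$, which hinges on $\|u_n\|_\infty,\|v_n\|_\infty\to0$ annihilating the factors $|u_n|^{p-1},|v_n|^{q-1}$ and on the $\gamma$-homogeneity of $\Phi,\Psi$ controlling the nonlocal terms. This is exactly what guarantees that both limiting problems reduce to the advection-free eigenvalue problem $-\Delta W=t_1AW$, whose only nonnegative nontrivial solution pins down $t_1=\lambda_1/\lambda_A$.
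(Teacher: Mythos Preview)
Your proposal is correct and follows essentially the same route as the paper: the paper's own proof of this lemma is simply a pointer back to the arguments of Section~3 (together with the commented-out Lemma~\texttt{CUB1}), and what you have written is precisely the faithful transcription of those arguments to the $p,q>1$ setting, with the advection terms absorbed into $G_{pq}$ and the key limit passage relying on $G_{pq}(U)=o(\|U\|_1)$. Your added explanation of why the nonlinear advection disappears in the normalization limit is a helpful gloss but not a departure from the paper's approach.
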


\begin{proof}[Proof of Theorem \ref{LP2}]
It follows straight away from previous lemmas.
\end{proof}
\begin{proof}[Proof of Theorem \ref{TP2}] 
From Theorem \ref{LP2}, the problem $(P_{p,q})$ has a solution if, and only if, $1 > t_1=\lambda_1/\lambda_A$. Therefore, $(P_{p,q})$ has a solution if, and only if, $\lambda_A>\lambda_1$, concluding the proof.
\end{proof}
\section{Describing a local branch of solutions}

In this section, we will apply the local bifurcation results of Crandall and Rabinowitz \cite{CrandallRabinowitz} to study the behavior of the solutions obtained in the previous sections. As a consequence, we will obtain existence results for positive solutions for \( \lambda_A < \lambda_1 \) when suitable assumptions are satisfied.

We now rewrite the system   \eqref{P} as follows
$$
\left\{
\begin{array}{lcl}
-\Delta u+pu^{p-1}\vec{\alpha}(x)\cdot \nabla u&=&\left(a-\int_{\Omega}K_1(x,y)f(u,v)dy \right)u+bv\mbox{ in }\Omega,\\
-\Delta v+qv^{q-1}\vec{\beta}(x)\cdot \nabla v&=&\left(d-\int_{\Omega}K_2(x,y)g(u,v)dy \right)v+cu\mbox{ in }\Omega,\\
\qquad \qquad \qquad \qquad u,v&>&0\mbox{ in }\Omega,\\ \qquad \qquad \qquad \quad u=v&=&0\mbox{ on }\partial\Omega
\end{array}
\right.
$$
where $K_1,K_2\in\mathcal{K}$, $f$ and $g$ verify the conditions $(f_0)-(f_2)$ and $a,b,c,d>0$. Additionally, to obtain the results we set out to prove, we assume

\noindent$(f_3)$ $f,g:[0,\infty)\times[0,\infty)\rightarrow\R^+$ are $C^1-$functions.

Hence,  $\Phi,\Psi:C^2(\Omega)\times C^2(\Omega)\rightarrow C(\Omega)\times C(\Omega)$ given by $\Phi(u,v)=\Phi_{(u,v)}$ and $\Psi(u,v)=\Psi_{(u,v)}$ are $C^1-$functions. Indeed, it is enough to see that
\begin{equation}
\Phi_{u}(u,v)u_0=\int_{\Omega}K(x,y)f_u(u,v)u_0dy,\ \Phi_{v}(u,v)v_0=\int_{\Omega}K(x,y)f_v(u,v)v_0dy,\mbox{  with }u,v,u_0,v_0\in C^2(\Omega).
\end{equation}
Analogously,
\begin{equation}
\Psi_{u}(u,v)u_0=\int_{\Omega}K_2(x,y)g_u(u,v)u_0dy,\ \Psi_{v}(u,v)v_0=\int_{\Omega}K_2(x,y)g_v(u,v)v_0dy,\mbox{  with }u,v,u_0,v_0\in C^2(\Omega).
\end{equation}

The above problem can be rewritten, in a matrix form, by
$$
\left\{
\begin{array}{lcl}
-\Delta U&=&AU-H(U)\quad \mbox{ in }\Omega,\\
\quad U&>&0\quad\mbox{ in }\Omega,\\
\quad U&=&0\quad\mbox{ on }\partial\Omega,
\end{array}
\right.\leqno(P_U)
$$
where
\begin{equation*}
U=\left(
\begin{array}{c}
u\\v
\end{array}
\right),\quad A=\left(
\begin{array}{cc}
a & b\\c & d
\end{array}
\right),
\end{equation*}
and $H:C^2(\Omega)\times C^2(\Omega)\rightarrow C(\Omega)\times C(\Omega)$ is given by
\begin{equation*}
H(U)=H(u,v)=\left(
\begin{array}{c}
pu^{p-1}\vec{\alpha}\cdot \nabla u+\Phi_{(u,v)}(x)u\\
qv^{q-1}\vec{\beta}\cdot \nabla v+\Psi_{(u,v)}(x)v\\
\end{array}
\right)=\left(
\begin{array}{c}
h_1(u,v)\\h_2(u,v)
\end{array}
\right).
\end{equation*}
We notice that $H$ is a $C^1-$function satisfying
\begin{equation}
H'(U)V=\left[
\begin{array}{cc}
h_{1,u}(u,v) & 	h_{1,v}(u,v)\\
h_{2,u}(u,v) & 	h_{2,v}(u,v)\\
\end{array}
\right],
\end{equation}
where
\begin{eqnarray*}
h_{1,u}(u,v)u_0&=&p(p-1)u^{p-2}u_0\vec{\alpha}\cdot \nabla u+pu^{p-1}\vec{\alpha}\cdot \nabla u_0+\Phi_{u}(u,v)u_0u+\Phi_{(u,v)}uu_0,\\
h_{1,v}(u,v)v_0&=&\Phi_{v}(u,v)v_0u,\quad
h_{2,u}(u,v)u_0=\Psi_{u}(u,v)u_0v \mbox{ and}\\
h_{2,v}(u,v)v_0&=&q(q-1)v^{q-2}v_0\vec{\beta}\cdot \nabla v+qv^{q-2}\vec{\beta}\cdot \nabla v_0+\Psi_{v}(u,v)v_0v+\Psi_{(u,v)}vv_0.
\end{eqnarray*}

In order to use the Crandall-Rabinowitz theorem \cite{CrandallRabinowitz}, we will consider the auxiliary problem
$$
\left\{
\begin{array}{lcl}
-\Delta U&=&tAU-H(U)\mbox{ in }\Omega,\\
\qquad	U&=&0\mbox{ on }\partial\Omega.
\end{array}
\right.\leqno(P_U^t)
$$
Then, we  define $X:=(C^2(\Omega)\cap C_0(\overline{\Omega}))\times (C^2(\Omega)\cap C_0(\overline{\Omega}))$, $Y:=C(\Omega)\times C(\Omega)$, and also
\begin{equation}\label{eq1}
\begin{array}{cccl}
	\mathcal{F}:&\R\times X&\rightarrow&Y\\
	&(t,U)&\mapsto&\mathcal{F}(t,U):=-\Delta U-tAU+H(U).
\end{array}
\end{equation}

We observe that the operator $\mathcal{F}$ is well defined and continuous.  Moreover, its partial derivatives $\mathcal{F}_t$, $\mathcal{F}_U$ and $\mathcal{F}_{tU}$ exist and are continuous. In fact, for all $U,V\in X$, they are given by
\begin{equation}
	\mathcal{F}_t(t,U)=-AU,\quad \mathcal{F}_U(t,U)V=-\Delta V-tAV+H'(U)(V)\mbox{ \ and \ }\mathcal{F}_{tU}(t,U)V=-AV.
\end{equation}

We will establish some properties of $\mathcal{F}$, which are crucial in our approach. We begin considering the linear operator $\mathcal{F}_U(t_1,0)$, $t_1=\lambda_1/\lambda_A$, where $\lambda_1$ is the principal eigenvalue $(-\Delta,H_0^1(\Omega))$ and $\lambda_A>0$ its largest eigenvalue of $A$. Since $H'(0)=0$, we have that $\mathcal{F}_U(t_1,0)=-\Delta-t_1A$ is a bounded self-adjoint operator and $\mathcal{F}_U(t_1,0)V=0$ if, and only if, $-\Delta V=t_1AV$. Thus, from \cite[Section 3]{RM}, we get 
\begin{equation}
V=\left(
\begin{array}{c}
\xi_1\phi_1\\
\xi_2\phi_1
\end{array}
\right)=\phi_1\left(
\begin{array}{c}
	\xi_1\\
	\xi_2
\end{array}
\right)=\phi_1\xi,
\end{equation}
where $\phi_1>0$ is an eigenfunction of $(-\Delta,H_0^1(\Omega))$ associated with the eigenvalue $\lambda_1$ and $\xi=(\xi_1,\xi_2)$ is an eigenvector of $A$ associated with $\lambda_A$. So, as $\dim N[A-\lambda I]=1$, we have $N[\mathcal{F}_U(t_1,0)]=\mbox{span}\{V\}$, furthermore, as $\mathcal{F}_U(t_1,0)$ is self-adjoint, then $R[\mathcal{F}_U(t_1,0)]=\mbox{span}\{V\}^{\perp}$.

In order to use the Crandall-Rabinowitz theorem, we also need to prove a transversality condition given below.

\begin{lemma}[Transversality Condition]
Suppose that $A$ is a symmetric matrix, then $\mathcal{F}_{tU}(t_1,0)V\notin R[\mathcal{F}_U(t_1,0)]$.
\end{lemma}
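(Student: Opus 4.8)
The plan is to verify the transversality condition by a direct inner-product computation, exploiting the self-adjointness of $\mathcal{F}_U(t_1,0)$ established just above: its range is the $L^2$-orthogonal complement of its one-dimensional kernel $\mathrm{span}\{V\}$. Since an element of $Y$ fails to lie in $\mathrm{span}\{V\}^{\perp}$ precisely when it is not $L^2$-orthogonal to $V$, it suffices to prove that
\begin{equation*}
\int_{\Omega}\langle \mathcal{F}_{tU}(t_1,0)V, V\rangle\, dx \neq 0,
\end{equation*}
where $\langle\cdot,\cdot\rangle$ denotes the Euclidean inner product on $\R^2$ taken pointwise.

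First I would use the formula $\mathcal{F}_{tU}(t_1,0)V = -AV$ together with the structure $V = \phi_1\xi$, where $\xi = (\xi_1,\xi_2)$ is an eigenvector of the symmetric matrix $A$ associated with $\lambda_A$. Applying $A$ pointwise yields $AV = \phi_1 A\xi = \lambda_A\phi_1\xi = \lambda_A V$, and hence $\mathcal{F}_{tU}(t_1,0)V = -\lambda_A V$. Substituting this into the integral above gives
\begin{equation*}
\int_{\Omega}\langle \mathcal{F}_{tU}(t_1,0)V, V\rangle\, dx = -\lambda_A\int_{\Omega}|V|^2\, dx = -\lambda_A(\xi_1^2+\xi_2^2)\int_{\Omega}\phi_1^2\, dx.
\end{equation*}
Since $\lambda_A>0$, $\xi\neq 0$, and $\phi_1>0$ in $\Omega$, the right-hand side is strictly negative, so the integral is nonzero. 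Consequently $\mathcal{F}_{tU}(t_1,0)V$ is not $L^2$-orthogonal to $V$, that is, $\mathcal{F}_{tU}(t_1,0)V\notin \mathrm{span}\{V\}^{\perp} = R[\mathcal{F}_U(t_1,0)]$, which is exactly the assertion.

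There is no serious analytic difficulty in this argument; once the eigenstructure is recorded, the verification is a one-line sign computation. The only point demanding care --- and the reason the hypothesis that $A$ is symmetric is imposed --- is the identification $R[\mathcal{F}_U(t_1,0)] = \mathrm{span}\{V\}^{\perp}$. This relies on $\mathcal{F}_U(t_1,0) = -\Delta - t_1 A$ being self-adjoint (a Fredholm operator of index zero whose range is the orthogonal complement of its kernel), which in turn requires $A = A^{T}$. Because symmetry is assumed in the statement, this characterization is already available from the discussion preceding the lemma, and the proof reduces to the computation displayed above.
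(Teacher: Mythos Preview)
Your proof is correct and follows essentially the same approach as the paper. Both arguments reduce to showing that $\int_{\Omega}\langle AV,V\rangle\,dx=\lambda_A(\xi_1^2+\xi_2^2)\int_{\Omega}\phi_1^2\,dx\neq 0$; the paper packages this as a contradiction argument (assume $-AV=\mathcal{F}_U(t_1,0)W$ for some $W$, pair with $V$, and use self-adjointness of $-\Delta$ together with symmetry of $A$), while you invoke directly the identification $R[\mathcal{F}_U(t_1,0)]=\mathrm{span}\{V\}^{\perp}$ stated just before the lemma and check the orthogonality condition, which is a slightly cleaner route to the same computation.
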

\begin{proof}
Initially, see that $\mathcal{F}_{tU}(t_1,0)V=-AV$. Then, we will show that $-AV\notin R[\mathcal{F}_U(t_1,0)]$. Indeed, otherwise, there would be $W\in X$ such that $\mathcal{F}_U(t_1,0)W=-AV$, that is,
\begin{equation*}
-\Delta W-t_1AW=-AV.
\end{equation*}
Consequently, by applying the inner product and integrating, we get
\begin{equation*}
\int_{\Omega}(-\Delta W)\cdot V-t_1AW\cdot Vdx=-\int_{\Omega}-AV\cdot Vdx.
\end{equation*}
Provided that $A$ is symmetric, we also have
\begin{equation*}
\int_{\Omega}(-\Delta W)\cdot Vdx=\int_{\Omega}W\cdot (-\Delta V)dx=\int_{\Omega}W\cdot (t_1 AV)dx=t_1\int_{\Omega}AW\cdot V dx,
\end{equation*}
and so
\begin{equation*}
0=\int_{\Omega}AV\cdot Vdx=\lambda_A\int_{\Omega}(\xi_1^2+\xi_2^2)\phi_1dx,
\end{equation*}
which is,  a contradiction.
\end{proof}

Now, we are in the conditions of \cite[Theorem 1.7]{CrandallRabinowitz}, hence, we are able to derive the following result on local bifurcation, analogous to that found in \cite[Theorem 4.2]{BrownStavrakakis}.

\begin{theorem}
Assume that $K_1,K_2\in \mathcal{K}$ and $(f_0)-(f_3)$ hold with $p,q>1$. Let $A=\left(\begin{array}{cc}a & b\\b & d\end{array}\right)$ be a symmetric matrix with $a,b,c,d>0$ and $\lambda_A>0$ its largest eigenvalue. Then, there exist $\varepsilon_0>0$ and continuous functions $\eta:(-\varepsilon_0,\varepsilon_0)\rightarrow\R$ and $\rho:(-\varepsilon_0,\varepsilon_0)\rightarrow \mbox{span}\{V\}^{\perp}$. These functions satisfy  $\eta(0)=0$, $\rho(0)=0$ and every nontrivial solution to $\mathcal{F}(t,U)=0$ in a small neighborhood of $(t_1,0)$ is of the form $(t_\varepsilon,U_\varepsilon)=(t_1+\eta(\varepsilon),\varepsilon V+\varepsilon\rho(\varepsilon))$.
\end{theorem}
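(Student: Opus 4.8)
The plan is to verify, one by one, the four hypotheses of the Crandall--Rabinowitz bifurcation theorem \cite[Theorem 1.7]{CrandallRabinowitz} for the map $\mathcal{F}$ defined in \eqref{eq1} at the point $(t_1,0)$, and then to invoke that theorem directly; its conclusion is exactly the parametrization claimed here. Recall that the theorem requires: $\mathcal{F}$ of class $C^1$ near $(t_1,0)$ with a trivial branch $\mathcal{F}(t,0)=0$ for all $t$; a one-dimensional kernel $N[\mathcal{F}_U(t_1,0)]=\operatorname{span}\{V\}$; a range $R[\mathcal{F}_U(t_1,0)]$ of codimension one; and the transversality condition $\mathcal{F}_{tU}(t_1,0)V\notin R[\mathcal{F}_U(t_1,0)]$. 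Granting these, the theorem furnishes $\varepsilon_0>0$ and continuous functions $\eta,\rho$ with $\eta(0)=\rho(0)=0$ such that every nontrivial zero of $\mathcal{F}$ in a neighborhood of $(t_1,0)$ is of the form $(t_\varepsilon,U_\varepsilon)=(t_1+\eta(\varepsilon),\varepsilon V+\varepsilon\rho(\varepsilon))$.

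First I would settle the trivial branch and the regularity. Since $p,q>1$, the advection nonlinearities $pu^{p-1}\vec{\alpha}\cdot\nabla u$ and $qv^{q-1}\vec{\beta}\cdot\nabla v$ define $C^1$ maps $X\to Y$ that vanish to second order at the origin; combined with the $C^1$-dependence of $\Phi$ and $\Psi$ guaranteed by $(f_3)$ (via the derivative formulas recorded just before the statement), this gives $H\in C^1(X,Y)$ with $H(0)=0$ and $H'(0)=0$. Hence $\mathcal{F}(t,0)=-\Delta 0-tA0+H(0)=0$ for every $t$, the partial derivatives $\mathcal{F}_t,\mathcal{F}_U,\mathcal{F}_{tU}$ computed in the excerpt are continuous, so $\mathcal{F}$ is $C^1$, and in particular $\mathcal{F}_U(t_1,0)=-\Delta-t_1A$ because $H'(0)=0$.

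Next I would assemble the spectral facts already in place, using that $A$ is symmetric. The computation preceding the statement shows $N[\mathcal{F}_U(t_1,0)]=\operatorname{span}\{V\}$ with $V=\phi_1\xi$, where simplicity of $\lambda_1$ for $(-\Delta,H_0^1(\Omega))$ and $\dim N[A-\lambda_A I]=1$ force the kernel to be one-dimensional; since $\mathcal{F}_U(t_1,0)$ is self-adjoint (here symmetry of $A$ is used), its range is the codimension-one subspace $R[\mathcal{F}_U(t_1,0)]=\operatorname{span}\{V\}^{\perp}$. The remaining transversality condition $\mathcal{F}_{tU}(t_1,0)V=-AV\notin R[\mathcal{F}_U(t_1,0)]$ is precisely the content of the transversality lemma proved above. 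With all four hypotheses verified, \cite[Theorem 1.7]{CrandallRabinowitz} applies and yields the asserted functions and parametrization, completing the proof.

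The main obstacle is the regularity bookkeeping of the first step: one must check carefully that the map $U\mapsto pu^{p-1}\vec{\alpha}\cdot\nabla u$ is genuinely $C^1$ from $X$ into $Y$ with vanishing Fréchet derivative at the origin. This is exactly where the restriction $p,q>1$ is indispensable, since for $p=q=1$ the term $\vec{\alpha}\cdot\nabla u$ is linear and does not vanish to second order, so $\mathcal{F}_U(t_1,0)$ would no longer reduce to $-\Delta-t_1A$ and the simple-eigenvalue structure underlying Crandall--Rabinowitz would be destroyed. Every other ingredient is a straightforward assembly of results already established earlier in the excerpt.
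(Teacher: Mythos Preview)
Your proposal is correct and follows essentially the same approach as the paper: the paper verifies, in the discussion immediately preceding the theorem, exactly the four Crandall--Rabinowitz hypotheses you list (the $C^1$ regularity with continuous $\mathcal{F}_t,\mathcal{F}_U,\mathcal{F}_{tU}$, the one-dimensional kernel $N[\mathcal{F}_U(t_1,0)]=\operatorname{span}\{V\}$, the codimension-one range $\operatorname{span}\{V\}^\perp$ via self-adjointness, and the transversality lemma), and then states the theorem as a direct consequence of \cite[Theorem 1.7]{CrandallRabinowitz} without further argument. Your write-up is slightly more explicit about the regularity of the advection term and the role of $p,q>1$, but the strategy is identical.
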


Returning to problem $(P_U^t)$, we get that
\begin{equation*}
\int_{\Omega}(-\Delta U_\varepsilon)\cdot Vdx=t_\varepsilon\int_{\Omega}AU_\varepsilon\cdot Vdx-\int_{\Omega}H(U_\varepsilon)\cdot Vdx.
\end{equation*}
Since,
\begin{equation*}
\int_{\Omega}(-\Delta U_\varepsilon)\cdot Vdx=\int_{\Omega}U_{\varepsilon}\cdot(-\Delta V) dx=\int_{\Omega}U_\varepsilon\cdot t_1AVdx,
\end{equation*}
 then, we have
\begin{equation*}
t_1\int_{\Omega}U_\varepsilon\cdot AVdx=t_\varepsilon\int_{\Omega}AU_\varepsilon\cdot Vdx-\int_{\Omega}H(U_\varepsilon)\cdot Vdx.
\end{equation*}
Therefore,
\begin{equation*}
t_1\int_{\Omega}U_\varepsilon\cdot AVdx=(t_1+\eta(\varepsilon))\int_{\Omega}AU_\varepsilon\cdot Vdx-\int_{\Omega}H(U_\varepsilon)\cdot Vdx.
\end{equation*}
Hence, since $A$ is a symmetric matrix, we get
\begin{equation*}
\eta(\varepsilon)=\frac{\int_{\Omega}H(U_\varepsilon)\cdot Vdx}{\int_{\Omega}AU_\varepsilon\cdot Vdx}.
\end{equation*}
Now, we will analyze the signs of the integrals. First, see that
\begin{eqnarray*}
H(U_\varepsilon)\cdot V&=&
\left[
\begin{array}{c}
pu_\varepsilon^{p-1}\vec{\alpha}(x)\cdot \nabla u_\varepsilon+\Phi_{(u_\varepsilon,v_\varepsilon)}(x)u_\varepsilon\\
qv_\varepsilon^{q-1}\vec{\beta}(x)\cdot \nabla v_\varepsilon+\Psi_{(u_\varepsilon,v_\varepsilon)}(x)v_\varepsilon\\	
\end{array}
\right]\cdot \left[
\begin{array}{c}
\xi_1\phi_1\\
\xi_2\phi_1
\end{array}
\right]\\
&=&[\xi_1(pu_\varepsilon^{p-1}\vec{\alpha}(x)\cdot \nabla u_\varepsilon+\Phi_{(u_\varepsilon,v_\varepsilon)}(x)u_\varepsilon)+\xi_2(qv_\varepsilon^{q-1}\vec{\beta}(x)\cdot \nabla v_\varepsilon+\Psi_{(u_\varepsilon,v_\varepsilon)}(x)v_\varepsilon)]\phi_1.
\end{eqnarray*}
It is important to note that $V=(\xi_1\phi_1,\xi_2\phi_1)$ and $\rho(\varepsilon)=(\rho_1(\varepsilon),\rho_2(\varepsilon))$, and so $U_\varepsilon=\varepsilon(\xi_1\phi_1+\rho_1(\varepsilon),\xi_2\phi_1+\rho_1(\varepsilon))$, that is, $U_{\varepsilon}=(u_\varepsilon,v_\varepsilon)$, where $u_\varepsilon=\varepsilon(\xi_1\phi_1+\rho_1(\varepsilon))$ and $v_\varepsilon=\varepsilon(\xi_2\phi_1+\rho_2(\varepsilon))$. 
On the other hand, by the homogeneity, it follows that
\begin{eqnarray*}
\int_{\Omega}\frac{H(U_\varepsilon)}{\varepsilon}\cdot Vdx&=\int_{\Omega}\xi_1\left(pu_\varepsilon^{p-1}\vec{\alpha}(x)\cdot \nabla \left(\frac{u_\varepsilon}{\varepsilon}\right) +\varepsilon^\gamma\Phi_{\left(\frac{u_\varepsilon}{\varepsilon},\frac{v_\varepsilon}{\varepsilon}\right)}(x)\frac{u_\varepsilon}{\varepsilon}\right)\phi_1 dx\\
&+\int_{\Omega}\xi_2\left(qv_\varepsilon^{q-1}\vec{\beta}(x)\cdot \nabla \left(\frac{v_\varepsilon}{\varepsilon}\right)+\varepsilon^\gamma\Psi_{\left(\frac{u_\varepsilon}{\varepsilon},\frac{v_\varepsilon}{\varepsilon}\right)}(x)\frac{v_\varepsilon}{\varepsilon}\right)\phi_1dx.
\end{eqnarray*}
In addition, as
\begin{equation*}
AU_\varepsilon\cdot V=A(\varepsilon V+\varepsilon\rho(\varepsilon))\cdot V=\varepsilon(AV\cdot V+A\rho(\varepsilon)\cdot V),
\end{equation*}
we get
\begin{equation}
\eta(\varepsilon)=\frac{\int_{\Omega}\left[\xi_1\left(pu_\varepsilon^{p-1}\vec{\alpha}(x)\cdot \nabla \left(\frac{u_\varepsilon}{\varepsilon}\right)+\varepsilon^\gamma\Phi_{\left(\frac{u_\varepsilon}{\varepsilon},\frac{v_\varepsilon}{\varepsilon}\right)}(x)\frac{u_\varepsilon}{\varepsilon}\right)+\xi_2\left(qv_\varepsilon^{q-1}\vec{\beta}(x)\cdot \nabla \left(\frac{v_\varepsilon}{\varepsilon}\right)+\varepsilon^\gamma\Psi_{\left(\frac{u_\varepsilon}{\varepsilon},\frac{v_\varepsilon}{\varepsilon}\right)}(x)\frac{v_\varepsilon}{\varepsilon}\right)\right]\phi_1dx}{\int_{\Omega}AV\cdot V+A\rho(\varepsilon)\cdot Vdx}.
\end{equation}
Taking $\delta: = \min\{\gamma, p-1, q-1\}$ we have

\begin{align*}
\dfrac{	\eta(\varepsilon)}{\varepsilon^{\delta}}=&\dfrac{\int_{\Omega}\xi_1\left(p\left(\frac{u_\varepsilon}{\varepsilon}\right)^{p-1}\varepsilon^{p-1-\delta }\vec{\alpha}(x)\cdot \nabla \left(\frac{u_\varepsilon}{\varepsilon}\right)+\varepsilon^{\gamma-\delta}\Phi_{\left(\frac{u_\varepsilon}{\varepsilon},\frac{v_\varepsilon}{\varepsilon}\right)}(x)\frac{u_\varepsilon}{\varepsilon}\right)\phi_1dx}{\int_{\Omega}AV\cdot V+A\rho(\varepsilon)\cdot Vdx}\\&+\dfrac{\int_\Omega\xi_2\left(q\left(\frac{v_\varepsilon}{\varepsilon}\right)^{q-1}\varepsilon^{q-1-\delta}\vec{\beta}(x)\cdot \nabla \left(\frac{v_\varepsilon}{\varepsilon}\right)+\varepsilon^{\gamma-\delta}\Psi_{\left(\frac{u_\varepsilon}{\varepsilon},\frac{v_\varepsilon}{\varepsilon}\right)}(x)\frac{v_\varepsilon}{\varepsilon}\right)\phi_1dx}{\int_{\Omega}AV\cdot V+A\rho(\varepsilon)\cdot Vdx}.
\end{align*}

Now, applying the Lebesgue's dominated convergence theorem, we consider all the cases involving the possible values of $\delta.$ 

If $\delta = \gamma < p-1, q-1$,  we obtain
\[\lim_{\varepsilon \to 0^+}\dfrac{	\eta(\varepsilon)}{\varepsilon^{\delta}}=\dfrac{\int_{\Omega}\left[\xi^2_1\Phi_{\left(V\right)}(x)+\xi^2_2\Psi_{\left(V\right)}(x)\right]\phi^2_1dx}{\int_{\Omega}AV\cdot Vdx}.\]
If $\delta = \gamma = p-1 < q-1$, we have
\[\lim_{\varepsilon \to 0^+}\dfrac{	\eta(\varepsilon)}{\varepsilon^{\delta}}=\dfrac{\int_{\Omega}\left[\xi^2_1\left(p\left(\xi_1\phi_1\right)^{p-2}\vec{\alpha}(x)\cdot \nabla \left(\xi_1\phi_1\right)+\Phi_{\left(V\right)}(x)\right)+\xi^2_2\Psi_{\left(V\right)}(x)\right]\phi^2_1dx}{\int_{\Omega}AV\cdot Vdx}.\]
If $\delta = \gamma = q-1 < p-1$, we get
\[\lim_{\varepsilon \to 0^+}\dfrac{	\eta(\varepsilon)}{\varepsilon^{\delta}}=\dfrac{\int_{\Omega}\left[\xi^2_1\Phi_{\left(V\right)}(x)+\xi^2_2\left(q\left(\xi_2\phi_1\right)^{q-2}\vec{\beta}(x)\cdot \nabla \left(\xi_2\phi_1\right)+ \Psi_{\left(V\right)}(x)\right)\right]\phi^2_1dx}{\int_{\Omega}AV\cdot Vdx}.\]
If $\delta = \gamma = q-1 = p-1$, we derive
\[\lim_{\varepsilon \to 0^+}\dfrac{	\eta(\varepsilon)}{\varepsilon^{\delta}}=\dfrac{\int_{\Omega}\left[\xi^2_1\left(p\left(\xi_1\phi_1\right)^{p-2}\vec{\alpha}\cdot \nabla \left(\xi_1\phi_1\right)+\Phi_{\left(V\right)}(x)\right)+\xi^2_2\left(q\left(\xi_2\phi_1\right)^{q-2}\vec{\beta}\cdot \nabla \left(\xi_2\phi_1\right)+ \Psi_{\left(V\right)}(x)\right)\right]\phi^2_1dx}{\int_{\Omega}AV\cdot Vdx}.\]
If $\delta = q-1 = p-1, \gamma $, we deduce
\[\lim_{\varepsilon \to 0^+}\dfrac{	\eta(\varepsilon)}{\varepsilon^{\delta}}=\dfrac{\int_{\Omega}\left[\xi^2_1p\left(\xi_1\phi_1\right)^{p-2}\vec{\alpha}\cdot \nabla \left(\xi_1\phi_1\right)+\xi^2_2q\left(\xi_2\phi_1\right)^{q-2}\vec{\beta}\cdot \nabla \left(\xi_2\phi_1\right)\right]\phi^2_1dx}{\int_{\Omega}AV\cdot Vdx}.\]
If $\delta = q-1 < p-1< \gamma $, we obtain
\[\lim_{\varepsilon \to 0^+}\dfrac{	\eta(\varepsilon)}{\varepsilon^{\delta}}=\dfrac{\int_{\Omega}q\left(\xi_2\phi_1\right)^{q}\vec{\beta}\cdot \nabla \left(\xi_2\phi_1\right)dx}{\int_{\Omega}AV\cdot Vdx} = q\xi_2^{q+1}(q+1)^{-1}\frac{\int_\Omega \phi_1^{q+1} \operatorname{div} \vec{\beta}}{ \int_{\Omega}AV\cdot Vdx}.\]
If $\delta = p-1 <q-1, \gamma $, we get
\[\lim_{\varepsilon \to 0^+}\dfrac{	\eta(\varepsilon)}{\varepsilon^{\delta}}=\dfrac{\int_{\Omega}p\left(\xi_1\phi_1\right)^{p}\vec{\alpha}\cdot \nabla \left(\xi_1\phi_1\right)dx}{\int_{\Omega}AV\cdot Vdx}= p\xi_1^{p+1}(p+1)^{-1}\frac{\int_\Omega \phi_1^{p+1} \operatorname{div} \vec{\alpha}}{ \int_{\Omega}AV\cdot Vdx}.\]

According to the sign of each limiting case presented above, we can determine the type of bifurcation direction when $\varepsilon\approx0^+$. This means that, if $\eta(\varepsilon)>0$ when $\varepsilon\approx0^+$, we have a supercritical bifurcation in $(t_1,0)$, on the other hand if $\eta(\varepsilon)<0$ when $\varepsilon\approx0^+$, we have a subcritical bifurcation in $(t_1,0)$. In particular, we obtain the following result:

\begin{corollary}
    Assume that $K_1,K_2\in \mathcal{K}$ and $(f_0)-(f_3)$ hold with $p,q>1$.
    \begin{enumerate}
        \item[i)] If $q<\min\{p,\gamma+1\}$ and
        $$\int_\Omega \phi_1^{q+1} \operatorname{div} \vec{\beta}<0,$$
        then there exists $\epsilon>0$ such that \eqref{P} has a positive solution if $\lambda_A<\lambda_1$  and $\lambda_1<(1+\epsilon)\lambda_A$.
        \item[ii)] If $p<\min\{q,\gamma+1\}$ and
        $$\int_\Omega \phi_1^{p+1} \operatorname{div} \vec{\alpha}<0,$$
        then there exists $\epsilon>0$ such that \eqref{P} has a positive solution if $\lambda_A<\lambda_1$  and $\lambda_1<(1+\epsilon)\lambda_A$.
    \end{enumerate}
\end{corollary}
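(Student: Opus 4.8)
The plan is to read the direction of bifurcation off the limits of $\eta(\varepsilon)/\varepsilon^{\delta}$ computed above and then to turn a subcritical branch into a genuine positive solution of \eqref{P} at the value $t=1$. Under the hypothesis of case i), namely $q<\min\{p,\gamma+1\}$, the exponent $\delta=\min\{\gamma,p-1,q-1\}$ equals $q-1$ and is the strict minimum of the three; consequently every $\xi_1$-term in the numerator carries a positive power of $\varepsilon$ and disappears in the limit, while only the advection part of the $\xi_2$-term survives, giving
$$\lim_{\varepsilon\to 0^+}\frac{\eta(\varepsilon)}{\varepsilon^{\delta}}=q\,\xi_2^{q+1}(q+1)^{-1}\frac{\int_{\Omega}\phi_1^{q+1}\operatorname{div}\vec{\beta}\,dx}{\int_{\Omega}AV\cdot V\,dx}.$$
In case ii), $p<\min\{q,\gamma+1\}$ forces $\delta=p-1$ to be the strict minimum and yields the analogous expression with $\vec{\beta},q,\xi_2$ replaced by $\vec{\alpha},p,\xi_1$.

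First I would fix the sign of the denominator: since $V=\phi_1\xi$ and $A\xi=\lambda_A\xi$, one has $AV\cdot V=\lambda_A|\xi|^2\phi_1^2$, so $\int_{\Omega}AV\cdot V\,dx=\lambda_A|\xi|^2\int_{\Omega}\phi_1^2\,dx>0$ because $\lambda_A>0$, $\xi\neq0$ and $\phi_1>0$. As $q>0$, $(q+1)^{-1}>0$ and $\xi_2>0$, the sign of the limit coincides with that of $\int_{\Omega}\phi_1^{q+1}\operatorname{div}\vec{\beta}\,dx$ (respectively $\int_{\Omega}\phi_1^{p+1}\operatorname{div}\vec{\alpha}\,dx$ in case ii)). Under the standing assumption that this integral is negative, the limit is strictly negative, hence $\eta(\varepsilon)<0$ for all sufficiently small $\varepsilon>0$; that is, the bifurcation at $(t_1,0)$ is subcritical and $t_\varepsilon=t_1+\eta(\varepsilon)<t_1$ for these $\varepsilon$.

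It then remains to harvest a solution at $t=1$. Shrinking $\varepsilon_0$ if necessary, we may assume that on $(0,\varepsilon_0)$ we simultaneously have $\eta(\varepsilon)<0$ and, because $V=\phi_1\xi$ is strictly positive in $\Omega$ with $\xi_1,\xi_2>0$ while $\rho(\varepsilon)\to0$ in $X\hookrightarrow C^1_0(\overline{\Omega})$, that $U_\varepsilon=\varepsilon(V+\rho(\varepsilon))$ is componentwise positive (using the Hopf boundary estimate to control the sign up to $\partial\Omega$). Since $\eta$ is continuous with $\eta(0)=0$, the image $\{t_\varepsilon:\varepsilon\in[0,\varepsilon_0)\}$ is an interval containing $t_1$ together with values strictly below it, hence it contains some $(t_1-a,t_1)$ with $a>0$; put $\epsilon:=a$. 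The hypotheses $\lambda_A<\lambda_1$ and $\lambda_1<(1+\epsilon)\lambda_A$ are exactly $1<t_1<1+\epsilon$, so $1\in(t_1-\epsilon,t_1)$ and the intermediate value theorem provides $\varepsilon^*\in(0,\varepsilon_0)$ with $t_{\varepsilon^*}=1$. Then $U_{\varepsilon^*}$ is a positive solution of $(P_U^t)$ at $t=1$, i.e.\ of \eqref{P}, proving the corollary. The main obstacle is precisely this last paragraph: ensuring that the leading-order sign of the computed limit genuinely controls the sign of $\eta(\varepsilon)$ near $0^+$, that $U_{\varepsilon^*}$ stays componentwise positive up to the boundary, and that the abstract branch width $a$ can be identified with the $\epsilon$ appearing in the statement.
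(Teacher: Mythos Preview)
Your proposal is correct and follows exactly the route the paper takes: the paper simply records the limiting values of $\eta(\varepsilon)/\varepsilon^{\delta}$, remarks that a negative limit forces a subcritical bifurcation at $(t_1,0)$, and then states the corollary without further argument. You have merely spelled out the details the paper leaves implicit---the positivity of the denominator $\int_\Omega AV\cdot V\,dx$, the componentwise positivity of $U_\varepsilon$ for small $\varepsilon>0$ via $V=\phi_1\xi>0$ and $\rho(\varepsilon)\to0$, and the intermediate-value step turning the subcritical branch into a solution at $t=1$ under $1<t_1<1+\epsilon$---so there is no genuine difference in approach.
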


\section{Declarations}

\noindent {\bf Ethical Approval:} \,\,\,\, Not applicable \\

\noindent {\bf Competing interests:}  \,\,\, The authors declare that they have no conflict of interest. \\

\noindent {\bf Authors' contributions:} \,\,\, The authors wrote the article together. \\

\noindent {\bf Funding: }  Romildo Lima was partially supported by CNPq/Brazil 306.411/2022-9 and 176.596/2023-2. Willian Cintra was partially supported by FAPDF 00193.00001821/2022-21 and CNPq 310664/2023-3.\\

\noindent {\bf Availability of data and materials: } \,\,\, Not applicable
\newpage

\end{document}